\definecolor{lightgray}{rgb}{0.83, 0.83, 0.83}
\newcommand{\medint}{-\kern  -,375cm\int}
\newenvironment{michelarev}{\color{red}}{\color{black}}
\newcommand{\bmicr}{\begin{michelarev}}
\newcommand{\emicr}{\end{michelarev}}
\theoremstyle{plain}
\newtheorem{theorem}{Theorem}[section]
\newtheorem{corollary}[theorem]{Corollary}
\newtheorem{lemma}[theorem]{Lemma}
\theoremstyle{definition}
\theoremstyle{remark}
\newtheorem{remark}[theorem]{Remark}
\newtheorem{example}[theorem]{Example}
\numberwithin{equation}{section} \makeatletter
\renewcommand{\p@enumi}{\thesection.}
\makeatother \pagestyle{myheadings} \allowdisplaybreaks
\email{michela.eleuteri@unimore.it}
  \email{paolo.marcellini@unifi.it}
 \email{elvira.mascolo@unifi.it}
 \email{stefania.perrotta@unimore.it}
\keywords{Elliptic equations, local minimizers, local Lipschitz continuity, $p,q-$growth, general growth}
\subjclass[2000]{Primary:  35J60, 35B65, 49N60; Secondary: 35J70, 35B45}
\begin{document}
\title[Local Lipschitz continuity with slow growth ]{Local Lipschitz
continuity for energy integrals \\
with slow growth }
\author[M. Eleuteri -- P. Marcellini -- E. Mascolo -- S. Perrotta]{Michela
Eleuteri -- Paolo Marcellini -- Elvira Mascolo -- Stefania Perrotta}
\address{Dipartimento di Scienze Fisiche, Informatiche e Matematiche,
Universit\`a degli Studi di Modena e Reggio Emilia, via Campi 213/b, 41125 -
Modena, Italy}
\address{Dipartimento di Matematica e Informatica ``U. Dini'', Universit\`a
di Firenze\\
Viale Morgagni 67/A, 50134 - Firenze, Italy}
\thanks{The authors are members of GNAMPA (Gruppo Nazionale per l'Analisi
Matematica, la Probabilit\`a e le loro Applica%
${{}^1}$%
zioni) of INdAM (Istituto Nazionale di Alta Matematica).}

\begin{abstract}
We consider some energy integrals under slow growth and we prove that the
local minimizers are locally Lipschitz continuous. Many examples are given,
either with subquadratic $p,q-$growth and/or anisotropic growth.
\end{abstract}

\maketitle

\begin{center}
\fbox{\today}
\end{center}

\section{Prologue\label{Section Prologue}}

When concerned with the $W^{1,\infty }$ or $C^{1,\alpha }$ regularity of
local minimizers of energy integrals of the calculus of variations of the
type 
\begin{equation}
F(u)=\int_{\Omega }f\left( Du\left( x\right) \right) \,dx\,
\label{energy integral}
\end{equation}%
we are naturally led to require a \textit{qualified convexity condition} on
the energy integrand $f:\mathbb{R}^{n}\rightarrow \mathbb{R}$; more
precisely, on the quadratic form of the $n\times n$ matrix of the second
derivatives $D^{2}f=\left( f_{\xi _{i}\xi _{j}}\right) $ of $f$ 
\begin{equation}
g_{1}\left( \left\vert \xi \right\vert \right) \left\vert \lambda
\right\vert ^{2}\leq \sum_{i,j=1}^{n}f_{\xi _{i}\xi _{j}}\left( \xi \right)
\lambda _{i}\lambda _{j}\leq g_{2}\left( \left\vert \xi \right\vert \right)
\left\vert \lambda \right\vert ^{2}\,,\;\;\;\forall \;\lambda ,\xi \in 
\mathbb{R}^{n},  \label{ellipticity condition in the prologue}
\end{equation}%
where $g_{1},g_{2}:\left[ 0,+\infty \right) \rightarrow \left[ 0,+\infty
\right) $ are given nonnegative real functions which allow us to control the 
\textit{ellipticity} in the minimization problem. Of course $g_{1}\left(
t\right) \leq g_{2}\left( t\right) $ for all $t\in \left[ 0,+\infty \right) $%
; if $g_{1}$ is positive and there exists a constant $M\geq 1$ such that $%
g_{2}\left( t\right) \leq Mg_{1}\left( t\right) $ for all $t\in \left[
0,+\infty \right) $ then we say that the we are dealing with an \textit{%
uniformly elliptic} problem. This is the case when the quadratic form $%
\sum_{i,j=1}^{n}f_{\xi _{i}\xi _{j}}\left( \xi \right) \lambda _{i}\lambda
_{j}$ has a simpler equivalent behavior as $g_{1}\left( \left\vert \xi
\right\vert \right) \left\vert \lambda \right\vert ^{2}$ and $g_{2}\left(
\left\vert \xi \right\vert \right) \left\vert \lambda \right\vert ^{2}$ and
the regularity process can work easier. However the assumption $g_{2}\leq
Mg_{1}$ rules out many interesting energy integrals; in this paper we do not
assume this uniformly elliptic condition.

For instance, in the special case $f\left( \xi \right) =g\left( \left\vert
\xi \right\vert \right) $ with $g:\left[ 0,+\infty \right) \rightarrow 
\mathbb{R}$, a direct computation (see for instance (6.3) in \cite{mar93}
and \cite{MP}) shows that 
\begin{equation*}
g_{1}\left( t\right) =\min \left\{ g^{\prime \prime }\left( t\right) ,\frac{%
g^{\prime }\left( t\right) }{t}\right\} ,\;\;\;\;g_{2}\left( t\right) =\max
\left\{ g^{\prime \prime }\left( t\right) ,\frac{g^{\prime }\left( t\right) 
}{t}\right\} ,\;\;\;\;\forall \;t\in \left[ 0,+\infty \right) ,
\end{equation*}%
where $g^{\prime },g^{\prime \prime }$ are the first and the second
derivatives of $g$. In this context the \textit{uniformly elliptic} case
corresponds to compare $g^{\prime \prime }\left( t\right) $ and $\frac{%
g^{\prime }\left( t\right) }{t}$; i.e. to ask for the existence of two
positive constants $m,M$ such that $m\frac{g^{\prime }\left( t\right) }{t}%
\leq g^{\prime \prime }\left( t\right) \leq M\frac{g^{\prime }\left(
t\right) }{t}$ for all $t\in \left( 0,+\infty \right) $. The $p-$Laplacian $%
f\left( \xi \right) =\left\vert \xi \right\vert ^{p}$ with $p>1$ is a main
example of \textit{uniformly elliptic energy integrand}, with $g\left(
t\right) =t^{p}$ and $\frac{g^{\prime }\left( t\right) }{t}=\frac{1}{p-1}%
g^{\prime \prime }\left( t\right) =pt^{p-2}$. Within this uniformly elliptic
context - however nonlinearities of possibly non-polynomial type are allowed
- we quote the global (i.e., up to the boundary) Lipschitz regularity
results by Cianchi-Maz'ya for a class of quasilinear elliptic equations \cite%
{Cianchi-Maz'ya2010} and for a class of nonlinear elliptic systems \cite%
{Cianchi-Maz'ya2014}.

Also some energy integrands of $p,q-$\textit{growth} can be uniformly
elliptic; for instance an integrand, which does not behave like a power, but
which however is an uniformly elliptic energy integrand, is $f(\xi
)=\left\vert \xi \right\vert ^{a+b\sin (\log \log |\xi |)}$; in this case $%
f(\xi )$ is a convex function for $\left\vert \xi \right\vert \geq e$ if $%
a,b>0$ and $a>1+b\sqrt{2}$. This function $f$ satisfies the $p,q-$\textit{%
growth conditions} with $p=a-b$ and $q=a+b$. It can be shown (see \cite%
{Boegelein-Duzaar-Marcellini-Scheven-2018},\cite%
{Boegelein-Duzaar-Marcellini-Scheven-2019}) that $f(\xi )$ satisfies the $%
\Delta _{2}$-condition. To notice however that some convex functions $%
f\left( \xi \right) =g\left( \left\vert \xi \right\vert \right) $ of $p,q-$%
growth with $p>1$ and $q>p$ arbitrarily close to $p$ exist, they do not
satisfy the $\Delta _{2}$-condition and the corresponding variational
problem are not uniformly elliptic; see Krasnosel'skij-Rutickii \cite[%
p.~28--29]{1961 Krasnosel'skij-Rutickii}, Focardi-Mascolo \cite[p.~342--343]%
{2001 Focardi-Mascolo}, Chlebicka \cite[Section~2.4]{Chlebicka} and B\"{o}%
gelein-Duzaar-Marcellini-Scheven \cite[Remark 3.3]%
{Boegelein-Duzaar-Marcellini-Scheven-2019}.

In this research we are concerned with the $W^{1,\infty }$ regularity of the
local minimizers of energy integrals of the calculus of variations of the
type (\ref{energy integral}), when the quadratic form of the second
derivatives $D^{2}f=\left( f_{\xi _{i}\xi _{j}}\right) $ of $f$ is governed
by (\ref{ellipticity condition in the prologue}) where $g_{1},g_{2}:\left[
0,+\infty \right) \rightarrow \left[ 0,+\infty \right) $ are given
nonnegative real functions, not only of polynomial type. The first local
Lipschitz-continuity results under this general context has been proposed in
the '90s in \cite{mar93},\cite{mar96} by assuming, among other conditions,
that $g_{1}\left( t\right) ,g_{2}\left( t\right) $ are increasing functions
in $\left[ 0,+\infty \right) $. This, when typified by the model case $%
g_{1}\left( t\right) =t^{p-2},g_{2}\left( t\right) =t^{q-2}$, gives $q\geq
p\geq 2$. The approach to regularity, governed by (\ref{ellipticity
condition in the prologue}) with general $g_{1},g_{2}$ functions not
necessarily monotone functions, was given by Marcellini-Papi \cite{MP}.
Related regularity results, with energy-integrands $f\left( x,\xi \right)
=g\left( x,\left\vert \xi \right\vert \right) $ depending on $x$ too, are
due to Mascolo-Migliorini \cite{Mascolo-Migliorini 2003}, Beck-Mingione \cite%
{Beck-Mingione 2020}, Di Marco-Marcellini \cite{DiMarco-Marcellini}, De
Filippis-Mingione \cite{DeFilippis-Mingione 2020}. See also
Apushkinskaya-Bildhauer-Fuchs \cite{Apushkinskaya-Bildhauer-Fuchs 2010} for
a local gradient bound of a-priori bounded minimizers.

More precisely, in \cite{Beck-Mingione 2020} Beck-Mingione consider the
vector-valued case of maps $u:\Omega \subset \mathbb{R}^{n}\rightarrow 
\mathbb{R}^{m}$ and $f\left( x,\xi \right) =g\left( \left\vert \xi
\right\vert \right) +h\left( x\right) u$ with the main part $g\left(
\left\vert \xi \right\vert \right) $ modulus dependent, as well as in \cite%
{Mascolo-Migliorini 2003},\cite{DiMarco-Marcellini},\cite%
{DeFilippis-Mingione 2020}; they also study the scalar case $m=1$ with the
more general integrand not modulus dependent, i.e. of the form $f\left(
x,\xi \right) =g\left( \xi \right) +h\left( x\right) u$, however with a
growth assumption from below of power type for some fixed exponent greater
than $1$ (see (1.33) in \cite{Beck-Mingione 2020}).

Here we focus our attention to \textit{slow growth} integrands, for which
the state-of-art is not so established. We give a general local $W^{1,\infty
}$ regularity result for the minimizers of energy integrals of the type (\ref%
{energy integral}),(\ref{ellipticity condition in the prologue}) with an
energy integrand $f=f\left( \xi \right) $ not necessarily depending on the
modulus of $\xi $ and with $g_{1},g_{2}:\left[ 0,+\infty \right) \rightarrow %
\left[ 0,+\infty \right) $ nonnegative decreasing real functions (more
precisely we require that only $g_{2}$ is a decreasing function), not only
of polynomial type. Precise statements can be found in the next section. We
treat \textit{general slow growth conditions} under the ellipticity
condition (\ref{ellipticity condition in the prologue}), where $g_{2}\left(
t\right) $ is a \textit{decreasing} (not necessarily strictly decreasing)
function with respect to $t$; of course in the model case $g_{2}\left(
t\right) =M\left( 1+\left\vert \xi \right\vert ^{2}\right) ^{\frac{q-2}{2}}$
this corresponds to $q\leq 2$. As already said, in this article we do not
assume uniformly elliptic conditions, nor the modulus dependence as $f\left(
\xi \right) =g\left( \left\vert \xi \right\vert \right) $.

In this regularity field specific references for \textit{slow growth} are
Fuchs-Mingione \cite{Fuchs-Mingione}, who concentrated on the \textit{%
nearly-linear growth}, such as for instance the \textit{logarithmic case} $%
f(\xi )=g\left( \left\vert \xi \right\vert \right) =|\xi |\log (1+|\xi |)$;
also Bildhauer, in his book \cite{Bildhauer 2003}, considered \textit{%
nearly-linear growth}. Leonetti-Mascolo-Siepe \cite{leo-mas-sie2} considered
the \textit{subquadratic }$p,q-$\textit{growth} with $1<p<q<2$, with energy
densities for instance the type $f\left( \xi \right) =g\left( \left\vert \xi
\right\vert \right) =|\xi |^{p}\log ^{\alpha }(1+|\xi |)$.

Here we emphasize some examples which enter in our regularity theory and
which seem not to be considered in the mathematical literature on this
subject. The first one is complementary to the case considered by
Bousquet-Brasco \cite{Bousquet-Brasco 2019} for exponents $p_{i}\geq 2$ for
all $i=1,2,\ldots ,n$; in fact here we can treat the model energy-integral
(see Example~\ref{orthotopic})%
\begin{equation}
F_{1}(u)=\int_{\Omega }\sum_{i=1}^{n}\left( 1+u_{x_{i}}^{2}\right) ^{\frac{%
p_{i}}{2}}\,dx\,  \label{integral with p_i}
\end{equation}%
when $1<p_{i}\leq 2$ for all $i=1,2,\ldots ,n$. In Section \ref{Section: New
examples of anisotropic energy functions} we propose some further examples
of anisotropic energy integrands which seem to be new in the mathematical
literature on this subject.

The mail regularity result that we propose in this manuscript is Theorem~\ref%
{superlinear} stated in the next section. It gives a more general regularity
result than similar results that can be found in the recent mathematical
literature on $p,q-$growth; see in particular the Remark \ref{Remark} for
details. Also the integral $\int_{\Omega }\left\vert Du\right\vert \log
^{a}\left( 1+\left\vert Du\right\vert \right) \,dx$, for every $a>0$, enters
in the regularity result of Theorem \ref{superlinear}. Of course a
by-product of our general Theorem \ref{superlinear} is also the $p,q-$growth
case, when the ellipticity conditions (\ref{ellipticity condition in the
prologue}) are satisfied with $g_{1}\left( \left\vert \xi \right\vert
\right) =m\,\left\vert \xi \right\vert ^{p-2}$, $g_{2}\left( \left\vert \xi
\right\vert \right) =M\left( 1+\left\vert \xi \right\vert ^{2}\right) ^{%
\frac{q-2}{2}}$, for some positive constants $m,M$ and exponents $1<p\leq
q\leq 2$ such that $\frac{q}{p}<1+\frac{2}{n}$\thinspace . As well known,
this condition guarantees the Lipschitz continuity of the solutions also
when $q\geq p>1$ and classically this is nowadays a well known constraint
for the $p,q-$growth (see \cite{mar91},\cite{mar96},\cite{Marcellini JMAA
2020}).

The regularity results are stated in the next section, while in Sections \ref%
{Examples} and \ref{Section: New examples of anisotropic energy functions}
some examples are considered in more details. The other sections are devoted
to the proofs.

\section{Introduction and statement of the main results\label{Section
Introduction}}

We assume that $f:\mathbb{R}^{n}\rightarrow \lbrack 0\,,+\infty )$ is a
convex function in $\mathcal{C}(\mathbb{R}^{n})\cap \mathcal{C}^{2}(\mathbb{R%
}^{n}\backslash B_{t_{0}}(0))$ for some $t_{0}\geq 0$, satisfying the
following growth condition: there exist two continuous functions $%
g_{1},g_{2}:[t_{0}\,,+\infty )\rightarrow (0\,,+\infty )$ and positive
constants $C_{1}$, $C_{2}$, $\alpha $, $\beta $ and $\mu\in[0\,,1] $ such
that 
\begin{equation}
\left\{ 
\begin{split}
& g_{1}\left( \left\vert \xi \right\vert \right) \left\vert \lambda
\right\vert ^{2}\leq \sum_{i,j=1}^{n}f_{\xi _{i}\xi _{j}}\left( \xi \right)
\lambda _{i}\lambda _{j}\leq g_{2}\left( \left\vert \xi \right\vert \right)
\left\vert \lambda \right\vert ^{2}\,,\;\;\;\forall \;\lambda ,\xi \in 
\mathbb{R}^{n},\;\left\vert \xi \right\vert {\geq }t_{0} \\
& t\mapsto t^{\mu}g_{2}(t)\text{ is decreasing and $t\mapsto tg_{2}(t)$ is
increasing} \\
&\left( g_{2}(t)\right) ^{\frac{n-2}{n}}\leq C_{1}t^{2\beta }g_{1}(t),\;\;\;%
\text{$\frac{1}{n}<\beta <\frac{2}{n}$},\;\forall \;\text{$t\geq t_{0}$} \\
& g_{2}(\left\vert \xi \right\vert )|\xi |^{2}\leq C_{2}\,[1+f(\xi
)]^{\alpha },\;\;\text{$\alpha >1$},\;\forall \text{ $\xi \in \mathbb{R}^{n}$%
},\;\left\vert \xi \right\vert \text{$\geq t_{0}$} \\
& f(\xi )/\left\vert \xi \right\vert \rightarrow +\infty \;\;\;\text{as }%
\left\vert \xi \right\vert \text{$\rightarrow \infty $}
\end{split}%
\right.  \label{H}
\end{equation}%
where $\frac{n-2}{n}$ in (\ref{H})$_{3}$, in the case $n=2$, must be
replaced with any fixed positive number less than ${1-\beta }$.

\bigskip

It is worth to highlight that we require uniform convexity and growth
assumptions on $f=f\left( \xi \right) $ only for large value of $\left\vert
\xi \right\vert $ (\cite{CE86},\cite{EMM1},\cite{EMM2},\cite{EMM3}). We say
that $u\in W_{loc}^{1,1}(\Omega )$ is a \textit{local minimizer} of the
integral functional $F$ in \eqref{energy integral} if $f\left( Du\right) \in
L_{loc}^{1}(\Omega )$ and 
\begin{equation*}
\int_{\Omega ^{\prime }}f(Du)\,dx\leq \int_{\Omega ^{\prime }}f(Du+D\varphi
)\,dx
\end{equation*}%
for every open set $\Omega ^{\prime }$, $\overline{\Omega ^{\prime }}\subset
\Omega $ and for every $\varphi \in W_{0}^{1,1}(\Omega ^{\prime })$. The
result for \textit{slow growth conditions}, under the ellipticity condition (%
\ref{ellipticity condition in the prologue}) with $g_{1}\left( t\right) $
and $g_{2}\left( t\right) $ general functions, can be stated as follows.

\begin{theorem}[general growth]
\label{superlinear} Let us assume that $f$ satisfies the growth assumptions
in (\ref{H}), with the parameters $\alpha $, $\beta $, $\mu $ related by the
condition 
\begin{equation}
{2-\mu -\alpha (n\beta -\mu )}>0\,.  \label{ab}
\end{equation}%
Then any minimizer $u\in W_{\mathrm{loc}}^{1,1}(\Omega )$ of 
\eqref{energy
integral} is locally Lipschitz continuous in $\Omega $ and, for every $%
0<\rho <R$, $\bar{B}_{R}\subset \Omega $, there exists a positive constant $%
C $ depending on $\rho $, $R$, $C_{1}$, $C_{2}$, $\alpha $, $\beta $, $\mu $%
, $g_{2}(t_{0})$, such that 
\begin{equation}
\Vert Du\Vert _{L^{\infty }(B_{\rho }\,;\mathbb{R}^{n})}\leq \,C\,\left\{ 
\frac{1}{(R-\rho )^{n}}\int_{B_{R}}\{1+f(Du)\}\,dx\right\} ^{\theta }
\label{infinity nu}
\end{equation}%
where $\theta =\frac{(2-\mu )\alpha }{2-\mu -\alpha (n\beta -\mu )}$.
\end{theorem}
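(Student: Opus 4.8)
The argument follows the by-now classical Marcellini-type scheme adapted to general (slow) growth: one first proves the a priori bound \eqref{infinity nu} for smooth solutions of a regularized problem, with a constant independent of the regularization, and then removes the extra regularity by approximation. \emph{Step 1 (regularization and reduction).} Fix $\bar B_R\subset\Omega$. Since $u$ is only in $W^{1,1}_{\mathrm{loc}}$ and $f$ need not be $C^2$ on $B_{t_0}$, I would approximate $f$ by integrands $f_\varepsilon$ obtained by mollifying $f$ and adding a small uniformly elliptic term $\varepsilon(1+|\xi|^2)^{\sigma/2}$, the exponent $\sigma>1$ being chosen so that the perturbed functional is ``standard'' while $f_\varepsilon$ still satisfies \eqref{H} with constants independent of $\varepsilon$ --- this is where the monotonicity ($t^{\mu}g_2(t)$ decreasing) is convenient, as the added term does not spoil the upper bound in \eqref{H}$_1$. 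Let $u_\varepsilon$ minimize $v\mapsto\int_{B_R}f_\varepsilon(Dv)\,dx$ over $u+W^{1,\sigma}_0(B_R)$; by standard regularity for smooth uniformly convex problems, $u_\varepsilon\in C^\infty(B_R)$ and solves $\int_{B_R}\sum_i (f_\varepsilon)_{\xi_i}(Du_\varepsilon)\varphi_{x_i}\,dx=0$. It then suffices to prove \eqref{infinity nu} for $u_\varepsilon$ with a constant independent of $\varepsilon$: by minimality $\int_{B_R}f_\varepsilon(Du_\varepsilon)\le\int_{B_R}f_\varepsilon(Du)$, which stays bounded; the $u_\varepsilon$ are then equi-Lipschitz on $B_\rho$, so a subsequence converges to some $w$, and lower semicontinuity together with minimality of $u$ force $w=u$ on $B_R$, giving \eqref{infinity nu} for $u$.

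\textbf{Step 2 (differentiated equation and Caccioppoli inequality).} Write $v=u_\varepsilon$ (and drop the subscript on $f$). Differentiating the Euler--Lagrange equation in the direction $x_s$ gives $\int\sum_{i,j}f_{\xi_i\xi_j}(Dv)\,v_{x_jx_s}\,\varphi_{x_i}\,dx=0$ for every $s$. I would insert $\varphi=\eta^2(1+|Dv|^2)^{\gamma}v_{x_s}$, $\eta\in C^\infty_c(B_R)$ a cutoff between $B_\rho$ and $B_R$ with $0\le\eta\le1$, and sum over $s=1,\dots,n$ (the test function is admissible since $v$ is smooth). Expanding the derivatives, discarding the nonnegative contribution carrying the factor $\gamma(1+|Dv|^2)^{\gamma-1}\ge0$, bounding the principal term below by $g_1$ via \eqref{H}$_1$ and estimating the cross term by $g_2$ and Young's inequality, one obtains a Caccioppoli-type inequality
\[
\int_{B_R}\eta^2\,g_1(|Dv|)\,(1+|Dv|^2)^{\gamma}\,|D^2v|^2\,dx\ \le\ C\,(1+\gamma)\!\int_{B_R}|D\eta|^2\,g_2(|Dv|)\,|Dv|^2\,(1+|Dv|^2)^{\gamma}\,dx .
\]

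\textbf{Step 3 (Sobolev embedding and iteration).} Put $w=H(|Dv|)$ with $H'(t)=\sqrt{g_1(t)}\,(1+t^2)^{\gamma/2}$, so that $|Dw|^2\le g_1(|Dv|)(1+|Dv|^2)^{\gamma}|D^2v|^2$ pointwise. Applying the Sobolev inequality to $\eta w$ and using Step~2,
\[
\Big(\int_{B_R}(\eta w)^{\frac{2n}{n-2}}dx\Big)^{\frac{n-2}{n}}\le C\!\int_{B_R}\!\big(\eta^2 g_1(1+|Dv|^2)^{\gamma}|D^2v|^2+|D\eta|^2 w^2\big)dx\le C(1+\gamma)\!\int_{B_R}\!|D\eta|^2 g_2(|Dv|)|Dv|^2(1+|Dv|^2)^{\gamma}dx ,
\]
with the usual replacement of $\frac{n-2}{n}$ when $n=2$; this is the point where the exponent $\frac{n-2}{n}$ in \eqref{H}$_3$ enters. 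Estimating $H(|Dv|)$ from below by $\sqrt{g_1}$ times a power of $(1+|Dv|)$, then using \eqref{H}$_3$ to trade $g_1$ for a power of $g_2$ on the left and \eqref{H}$_4$ to bound $g_2|Dv|^2$ by $(1+f(Dv))^{\alpha}$ on the right --- and using the monotonicity of $t\mapsto tg_2(t)$ and $t\mapsto t^{\mu}g_2(t)$ to interpolate the weights between the two scales --- one arrives at a reverse-H\"older inequality relating weighted norms of $(1+f(Dv))$ at exponents differing by the factor $\tfrac{n}{n-2}$. After a preliminary step giving $f(Du_\varepsilon)\in L^s_{\mathrm{loc}}$ for some $s>1$ (a first run of the same machinery with $\gamma$ fixed), one iterates over shrinking balls with $\gamma\to\infty$, Moser-style; summing the resulting product of constants and the geometric series in $(R-\rho)^{-1}$ yields \eqref{infinity nu}. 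The exponents produced by this iteration close up precisely under \eqref{ab}, and the limit of the iterated exponents is the value of $\theta$ in the statement.

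\textbf{Main obstacle.} I expect two places to be delicate. First, Step~1: the regularization must be engineered so that the full list \eqref{H} survives with uniform constants --- the added uniformly elliptic perturbation has to be compatible with the ``decreasing $g_2$'' structure, and the mollification near $B_{t_0}$ has to be controlled --- and the passage to the limit takes place in an Orlicz--Sobolev setting, since $f$ has only superlinear (possibly nearly linear) growth, so one cannot lean on reflexivity or a $\Delta_2$ property when identifying the limit with $u$. Second, the exponent bookkeeping in Step~3: one must track exactly how the cutoff $\eta$ and the weight $(1+|Dv|^2)^{\gamma}$ propagate through each iteration in order to recover the sharp power $(R-\rho)^{-n}$ and the exponent $\theta$, and it is precisely this bookkeeping that isolates the threshold \eqref{ab}.
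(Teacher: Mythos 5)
Your outline follows the same broad scheme as the paper --- regularize, differentiate the Euler--Lagrange equation with a weighted test function, Sobolev--Moser iterate, then pass to the limit --- but two essential pieces are missing, and you yourself flag them as the ``delicate'' spots.

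\textbf{The interpolation lemma is the missing key.} Condition \eqref{ab} and the exponent $\theta=\frac{(2-\mu)\alpha}{2-\mu-\alpha(n\beta-\mu)}$ are \emph{not} produced by the Moser iteration itself. The paper runs the iteration keeping the weight $V=(1+(|Du|-1)_+)^2 g_2(1+(|Du|-1)_+)$ on the right-hand side all the way through (using \eqref{H}$_3$ at each step to trade $g_1$ for a fractional power of $g_2$, as packaged in Lemma \ref{g1eg2}), arriving at
\begin{equation*}
\bigl(\Vert 1+(|Du|-1)_+\Vert_{L^\infty(B_\rho)}\bigr)^{2-n\beta}\ \le\ \frac{c}{(R-\rho)^n}\int_{B_R} V\,dx.
\end{equation*}
Only \emph{after} the iteration is finished does it invoke the separate interpolation Lemma \ref{interpolation}, with $\vartheta=\frac{2-\mu}{2-n\beta}$ and $\lambda=\frac1\alpha$, to convert the $L^1$ integral of $V$ on the right into an $L^{1/\alpha}$ integral; and only then is \eqref{H}$_4$ used, exactly once, to bound $V^{1/\alpha}$ by $C(1+f(Du))$. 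Condition \eqref{ab} is precisely the applicability threshold $\vartheta(1-\lambda)<1$ of that interpolation lemma. If instead you feed \eqref{H}$_4$ into the Caccioppoli inequality inside the iteration, as your Step 3 suggests, the $\alpha$-powers compound at every rung of the ladder and the exponents do not close to $\theta$. Recognizing that the iteration must be run with the $g_2$-weighted quantity and that a post-iteration $L^1$--$L^{1/\alpha}$ interpolation is required is the decisive idea your outline lacks.

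\textbf{The approximation step needs more structure.} Since the boundary datum $u$ is only $W^{1,1}$, standard elliptic theory cannot be applied to the perturbed problem with $u$ as boundary value; the paper mollifies the boundary data to $u_\varepsilon\in\mathcal C^2(\overline B_R)$, verifies the bounded slope condition, and invokes the Hartman--Stampacchia theorem to produce Lipschitz minimizers $v_{\varepsilon,k}$ of the approximating functionals. Moreover your claim that ``lower semicontinuity together with minimality of $u$ force $w=u$'' requires strict convexity of $f$, which is assumed only for $|\xi|>t_0$; minimizers need not be unique. The paper circumvents this by showing that $u$ and the limit $\bar v$ may differ only on the set where $\bigl|\tfrac{Du+D\bar v}{2}\bigr|\le t_0$, whence $\Vert Du\Vert_\infty\le 2t_0+\Vert D\bar v\Vert_\infty$. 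Relatedly, the paper's test function uses $\Phi((|Du|-1)_+)$ rather than $(1+|Du|^2)^\gamma$, which localizes the computation to the region $|Du|\geq t_0$ where the ellipticity hypotheses \eqref{H} actually hold; with your choice you would need an auxiliary extension of $g_1,g_2$ to small $|\xi|$.
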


\bigskip

When we specialize Theorem \ref{superlinear} to the \textit{subquadratic }$%
p,q-$\textit{growth} we obtain:

\begin{corollary}[$p,q-$growth]
\label{theorem on p,q} Let $f=f\left( \xi \right) $ be a convex function in $%
\mathcal{C}(\mathbb{R}^{n})\cap \mathcal{C}^{2}(\mathbb{R}^{n}\backslash
B_{t_{0}}(0))$ for some $t_{0}\geq 0$, satisfying the ellipticity conditions 
\begin{equation}
m\,|\xi |^{p-2}|\lambda |^{2}\leq \sum_{i,j=1}^{n}f_{\xi _{i}\xi _{j}}(\xi
)\lambda _{i}\lambda _{j}\leq M\left( 1+\left\vert \xi \right\vert
^{2}\right) ^{\frac{q-2}{2}}|\lambda |^{2},\quad \forall \;\lambda ,\xi \in 
\mathbb{R}^{n}:\left\vert \xi \right\vert \geq t_{0}\,,
\label{ellipticity conditions under p,q}
\end{equation}%
for some positive constants $m,M$ and exponents $p,q$, $1\leq p\leq q\leq 2$%
, such that 
\begin{equation}
\frac{q}{p}<1+\frac{2}{n}.  \label{q/p for p,q<=2}
\end{equation}%
Then every local minimizer $u\in W_{\mathrm{loc}}^{1,p}\left( \Omega \right) 
$ to the energy integral in \eqref{energy integral} \textit{is of class }$W_{%
\mathrm{loc}}^{1,\infty }\left( \Omega \right) $\textit{\ and there exists a
constant }$C>0$, depending only on $p,q,n,m,M$, such that, for \textit{all }$%
\rho ,R$\textit{\ with }$0<\rho <R\leq \rho +1$, 
\begin{equation}
\left\Vert Du\right\Vert _{L^{\infty }\left( B_{\rho };\mathbb{R}^{n}\right)
}\leq C\left\{ \frac{1}{\left( R-\rho \right) ^{n}}\int_{B_{R}}\left\{
1+f\left( Du\right) \right\} \,dx\right\} ^{\frac{2}{(n+2)p-nq}}.
\label{bound for p,q<=2}
\end{equation}
\end{corollary}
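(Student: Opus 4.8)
The plan is to deduce Corollary~\ref{theorem on p,q} from Theorem~\ref{superlinear} and then to sharpen the exponent. First I would exhibit (\ref{ellipticity conditions under p,q}) as an instance of (\ref{H})$_{1}$ by taking $g_{1}(t)=m\,t^{p-2}$ and $g_{2}(t)=M\bigl(1+t^{2}\bigr)^{(q-2)/2}$; since $q\le 2$ one has $g_{2}(t)\le M\,t^{q-2}$ for $t>0$, and I would use this monomial majorant in the remaining conditions. Both $g_{1},g_{2}$ are continuous, positive and non-increasing on $[t_{0},+\infty)$; if the given $t_{0}$ equals $0$ the lower bound forces $p=2$, hence $p=q=2$ (the classical uniformly elliptic case), and otherwise one may enlarge $t_{0}$ to $\max\{t_{0},1\}$ without harm, the constant of Theorem~\ref{superlinear} depending on $g_{2}(t_{0})\le M$ only. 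The remaining parameters I would fix as $\mu=2-q\in[0,1]$, $\alpha=q/p$, and $\beta=\frac{n(q-p)+2(2-q)}{2n}$ (replaced by a slightly larger value in $(1/n,2/n)$ should it not exceed $1/n$).

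The verifications then go as follows. Condition (\ref{H})$_{2}$ is immediate: $t^{\mu}g_{2}(t)\equiv M$ is (non-strictly) decreasing and $t\,g_{2}(t)=M\,t^{q-1}$ is increasing since $q\ge 1$. For (\ref{H})$_{3}$ the key elementary identity is $2\beta+(p-2)=\frac{(q-2)(n-2)}{n}$, which turns $\bigl(g_{2}(t)\bigr)^{(n-2)/n}\le M^{(n-2)/n}t^{(q-2)(n-2)/n}$ into $\bigl(g_{2}(t)\bigr)^{(n-2)/n}\le\bigl(M^{(n-2)/n}/m\bigr)\,t^{2\beta}g_{1}(t)$ for $t\ge 1$, that is (\ref{H})$_{3}$ with $C_{1}=M^{(n-2)/n}/m$; here $\beta<2/n$ because $q/p<1+2/n<n/(n-2)$. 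For (\ref{H})$_{4}$ and (\ref{H})$_{5}$ I would integrate the lower ellipticity bound twice along the rays $t\mapsto f(t\omega)$ to obtain $f(\xi)\ge c|\xi|^{p}-C$ for $|\xi|\ge t_{0}$ when $p>1$ (and $f(\xi)\ge c|\xi|\log|\xi|-C$ when $p=1$), so that $f(\xi)/|\xi|\to+\infty$ and, together with $g_{2}(|\xi|)|\xi|^{2}\le M|\xi|^{q}$, $g_{2}(|\xi|)|\xi|^{2}\le C_{2}[1+f(\xi)]^{q/p}$; thus (\ref{H})$_{4}$ holds with $\alpha=q/p$ (which is $>1$ if $q>p$; if $q=p$ any $\alpha>1$ is admissible). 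Finally, with these $\mu,\beta$ one computes $n\beta-\mu=\frac{n(q-p)}{2}$, hence $2-\mu-\alpha(n\beta-\mu)=q\cdot\frac{(n+2)p-nq}{2p}$, so (\ref{ab}) is equivalent to (\ref{q/p for p,q<=2}). At this stage Theorem~\ref{superlinear} already yields local Lipschitz continuity and a bound of the type (\ref{bound for p,q<=2}), but with a larger exponent ($\theta=\frac{(2-\mu)\alpha}{2-\mu-\alpha(n\beta-\mu)}=\frac{2q}{(n+2)p-nq}$ with the above choices, which is the best the theorem provides and is not sharp — it degenerates to $\theta\approx1$ when $q=p$, whereas the classical value there is $1/p$). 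To reach the exponent $2/[(n+2)p-nq]$ stated in (\ref{bound for p,q<=2}), and a constant depending only on $p,q,n,m,M$ with the explicit factor $(R-\rho)^{-n}$ ($R-\rho\le1$), I would retrace the proof of Theorem~\ref{superlinear} with the explicit monomial weights $g_{1}(t)=m\,t^{p-2}$, $g_{2}(t)=M\,t^{q-2}$, after the harmless normalization of subtracting an affine function from $f$; in the power case every Caccioppoli estimate, Sobolev embedding and iteration step is explicit, so the exponents can be balanced sharply.

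The step I expect to be the main obstacle is exactly this sharpening: since the general theorem is not tight in exponent for $p,q$-growth, one cannot simply quote it for (\ref{bound for p,q<=2}), and one must re-run the iteration scheme with the monomial weights, checking that $q/p<1+2/n$ — equivalently $(n+2)p-nq>0$ — is precisely the condition keeping the iteration convergent and the target exponent positive. A secondary delicate point is the passage from the pointwise bounds on $D^{2}f$ to the growth bounds on $f$: these hold only for $|\xi|\ge t_{0}$, degenerate to logarithmically superlinear growth as $p\downarrow1$, and must be combined with the normalization above so that $C_{2}$, and hence the final constant in (\ref{bound for p,q<=2}), depend on the data alone.
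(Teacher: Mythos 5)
Your plan is the paper's plan, and you land on the same choices of data: $g_1(t)=m t^{p-2}$, $g_2(t)=M t^{q-2}$ (using the majorant $M(1+t^2)^{(q-2)/2}\le M t^{q-2}$ for $q\le 2$), $\mu=2-q$, $\alpha=q/p$, and $\beta=\bar\beta=\frac{n-2}{2n}q-\frac p2+\frac 2n$ (which equals your $\frac{n(q-p)+2(2-q)}{2n}$). Your verification of \eqref{H}$_2$, \eqref{H}$_3$ and, via integrating $D^2 f$ along rays, of \eqref{H}$_4$--\eqref{H}$_5$ with $\alpha=q/p$ matches the paper (the latter is exactly Lemma~\ref{p<f<q}). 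You also correctly compute that with these parameters \eqref{ab} is $\frac qp<\frac{n+2}n$, and that the exponent in Theorem~\ref{superlinear} is $\theta=\frac{(2-\mu)\alpha}{2-\mu-\alpha(n\beta-\mu)}=\frac{2q}{(n+2)p-nq}$, a factor $q$ off from \eqref{bound for p,q<=2}. So the approach and the diagnosis of where it must be sharpened are correct.

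Where you stop short is the sharpening itself: ``retrace the iteration with monomial weights'' is much more work than needed, and also leaves the open question of whether the exponents actually come out right. The paper's fix is local to a single step. In the proof of Lemma~\ref{step 2}, the quantity controlled in $L^\infty$ is $V=(1+(|Du|-1)_+)^2\,g_2(1+(|Du|-1)_+)$, and \eqref{ultima} gives $\Vert V\Vert_{L^\infty(B_\rho)}\le [\cdots]^{\frac{(2-\mu)\alpha}{2-\mu-\alpha(n\beta-\mu)}}=[\cdots]^{\frac{2q}{(n+2)p-nq}}$. To state \eqref{eq:apriori} in full generality the paper then only uses the crude lower bound $V\ge g_2(1)|Du|$ from \eqref{H}$_2$, which passes a factor $1$ rather than $q$ through the exponent. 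But in the power-weight case $V=M|Du|^q$ exactly on $\{|Du|\ge 1\}$, so $\Vert Du\Vert_{L^\infty}^q\lesssim\Vert V\Vert_{L^\infty}$ and dividing the exponent by $q$ yields precisely $\frac{2}{(n+2)p-nq}$. That is the whole sharpening; no re-derivation of the Caccioppoli or Moser steps is needed.

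Two smaller remarks. First, the ``slightly larger $\beta$'' escape you propose when $\bar\beta\le 1/n$ is not harmless: increasing $\beta$ strictly worsens $\theta$, and for $p=q>1$ one has $\bar\beta=(2-p)/n<1/n$, so this route does not reach the stated exponent $1/p$. What actually happens in the proof of Lemma~\ref{step 2} is that the interpolation only requires $\vartheta=\frac{2-\mu}{2-n\beta}\ge 1$, i.e.\ $\beta\ge \mu/n=(2-q)/n$, and $\bar\beta\ge(2-q)/n$ always holds (with equality at $p=q$); so one should rely on this weaker constraint rather than the nominal $\beta>1/n$ of \eqref{H}$_3$ (a point the paper itself glosses over). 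Second, the aside that $t_0=0$ forces $p=q=2$ and the normalization by subtracting an affine function are both fine but inessential; the paper simply sets $t_0=1$ at the start of Section~\ref{A priori estimates}.
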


\bigskip

Let us briefly sketch the tools and the techniques to prove the above
regularity results. A first step is an \textit{a priori estimate} for smooth
minimizers through the interpolation result stated in Lemma \ref%
{interpolation}. The second step is an approximation procedure: we construct
a sequence of smooth strictly convex functions $f_{k}$, each of them being
equal to $f$ for large $\left\vert \xi \right\vert $, in the same outlook in 
\cite{MS},\cite{mar96}. More in details, if $u$ is a local minimizer of %
\eqref{energy integral}, we consider the sequence of variational problems in
a ball $B_{R}$, $\overline{B}_{R}\subset \Omega $, with as integrand a
suitable perturbation of $f_{k}$ and boundary value data $u_{\epsilon
}=u\ast \varphi _{\varepsilon }$, where $\varphi _{\varepsilon }$ are smooth
mollifiers. Each $u_{\epsilon }$ satisfies the \textit{bounded slope
condition}; then, by the well know existence and Lipschitz regularity
theorem by Hartman-Stampacchia \cite{Hartman-Stampacchia} each problem has a
unique Lipschitz continuous solution $v_{\epsilon }$. By applying the
a-priori estimate to the sequence of the solutions we get an uniform control
in $L^{\infty }$ of the gradient of $v_{\epsilon }$, which allows us to
transfer the Lipschitz continuity property to the original minimizer $u$.

\smallskip The plan of the paper is the following: in Sections \ref{Examples}%
, \ref{Section: New examples of anisotropic energy functions} we present
some examples, some of them being new in this context of general growth
conditions. In Section \ref{Preliminary lemma} we give the \textit{%
interpolation lemma}. In Section \ref{A priori estimates} we prove the a
priori estimate for functionals with \textit{general slow growth} by means
of the interpolation lemma. In the last section we prove the regularity
results. As we show in the next section, the class of energy integrals that
we consider is quite large, not only polynomial unbalanced $p,q-$\textit{%
subquadratic growth} as in the Corollary \ref{theorem on p,q}, but also 
\textit{logaritmic growth} (as in Examples \ref{log} and \ref{loglog}) and 
\textit{anisotropic behaviour} (Example \ref{orthotopic}).

\bigskip

\section{Examples\label{Examples}}

In this section we present some examples of density function $f$ for which
the above assumptions hold.

\smallskip

\begin{example}
\label{log} $f(\xi)=|\xi|(\log|\xi|)^a$, $a>0$, $|\xi|\geq t_0\geq 1$. For
large $t$ \eqref{H}$_1$ holds for $g_1(t)=\frac{a}2\frac{(\log t)^{a-1}}{t}$
and $g_2(t)=(1+a)\frac{(\log t)^a}{t}$. It is easy to check that \eqref{H}$%
_2 $ and \eqref{H}$_3$ hold for every $\beta>\frac 1n$. Since $%
g_2(|\xi|)|\xi|^2= (1+a)f(\xi)$, \eqref{H}$_4$ holds for every $\alpha>1$.
Moreover, for every $\mu<1$, $t^{\mu}g_2(t)$ is decreasing in $%
[t_0\,,+\infty)$, choosing $\alpha>\frac{1}{n\beta-1}$, \eqref{ab} follows.
Therefore Theorem \ref{superlinear} applies for every $a>0$.
\end{example}

\smallskip

\begin{example}
\label{loglog} $f(\xi )=(|\xi |+1)L_{k}(|\xi |)$, $g(t)=(1+t)L_{k}(t)$, $%
k\in \mathbb{N}$, $L_{k}$ defined as: 
\begin{equation*}
L_{1}\left( t\right) =\log \left( 1+t\right) ,\;\;\;\;\;L_{k+1}\left(
t\right) =\log \left( 1+L_{k}\left( t\right) \right) ;
\end{equation*}%
therefore 
\begin{equation*}
L_{1}^{\prime }\left( t\right) =\frac{1}{1+t}\,,\;\;\;\;L_{k+1}^{\prime }(t)=%
\frac{L_{k}^{\prime }(t)}{1+L_{k}(t)}=\frac{1}{(1+t)(1+L_{1}(t))\cdots
(1+L_{k-1}(t))}\,.
\end{equation*}%
Then we get 
\begin{equation*}
g^{\prime }(t)=L_{k}(t)+\frac{1}{(1+L_{1}(t))\cdots (1+L_{k-1}(t))}\quad
\Longrightarrow \quad g_{2}(t)=\frac{2}{1+t}L_{k}(t)\,;
\end{equation*}%
\begin{equation*}
\begin{split}
g^{\prime \prime }(t)=\frac{1}{(1+t)(1+L_{1}(t))\cdots (1+L_{k-1}(t))}& %
\left[ 1-\sum_{i=1}^{k-1}\frac{1}{(1+L_{1}(t))\cdots (1+L_{i}(t))}\right] \\
\quad \Longrightarrow \quad g_{1}(t)=& \frac{1}{2(1+t)(1+L_{1}(t))\cdots
(1+L_{k-1}(t))}\,.
\end{split}%
\end{equation*}

Similarly to the Example \ref{log}, for $t$ large enough, $\mu =1$ and $%
\beta >\frac{1}{n}$, \eqref{H}$_{2}$ and \eqref{H}$_{3}$ hold. Moreover, $%
g_{2}(|\xi |)|\xi |^{2}\leq 2f(\xi )$; therefore \eqref{H}$_{4}$ holds for
every $\alpha >1$. Since we can choose $\alpha $ and $\beta $ such that %
\eqref{ab} holds, every local minimizer of the corresponding integral is
locally Lipschitz continuous (see \cite{Fuchs-Mingione} for related results).
\end{example}

\smallskip

\begin{example}
\label{orthotopic} Next we consider the \textit{anisotropic case} of the
energy integral in (\ref{energy integral}) with 
\begin{equation}
f\left( \xi \right) =\sum_{i=1}^{n}\left\vert \xi _{i}\right\vert
^{p_{i}},\;\;\;\;\;\xi =\left( \xi _{1},\xi _{2},\ldots ,\xi _{n}\right) \in 
\mathbb{R}^{n},  \label{p_i}
\end{equation}%
where the exponents $p_{i}$ are greater than or equal to {$2$} for all $%
i=1,2,\ldots ,n$. Of course $f\left( \xi \right) $ in (\ref{p_i}) is a
convex function in $\mathbb{R}^{n}.$ Note that the $n\times n$ matrix of the
second derivatives $D^{2}f=\left( f_{\xi _{i}\xi _{j}}\right) $ of $f$ is
diagonal and the corresponding quadratic form is given by 
\begin{equation}
\sum_{i,j=1}^{n}f_{\xi _{i}\xi _{j}}\left( \xi \right) \lambda _{i}\lambda
_{j}=\sum_{i=1}^{n}p_{i}\left( p_{i}-1\right) \left\vert \xi _{i}\right\vert
^{p_{i}-2}\left\vert \lambda _{i}\right\vert ^{2}\,,\;\;\;\forall \;\lambda
,\xi \in \mathbb{R}^{n}.  \label{quadratic form}
\end{equation}%
This quadratic form is positive semidefinite but is not definite if (at
least) one of the exponents $p_{i}$ is greater than {$2$}; in fact, if for
instance $p_{1}>2$, then $\sum_{i,j=1}^{n}f_{\xi _{i}\xi _{j}}\left( \xi
\right) \lambda _{i}\lambda _{j}=0$ when $\xi =\left( \xi _{1},0,\ldots
,0\right) \neq 0$ and $\lambda =\left( 0,\lambda _{2},\ldots ,\lambda
_{n}\right) \neq 0$. Nevertheless, in spite of this lack of uniform
convexity, without using the quadratic form in (\ref{quadratic form}), the
local $L^{\infty }-$bound of the minimizers has been established in \cite%
{Cupini-Marcellini-Mascolo 2009}-\cite{Cupini-Marcellini-Mascolo 2018},\cite%
{Dibenedetto-Gianazza-Vespri},\cite{Fusco-Sbordone} under some optimal
conditions on the exponents $p_{i}>1$. More recently Bousquet-Brasco \cite%
{Bousquet-Brasco 2019} proved that \textit{bounded} minimizers of the energy
integral (\ref{energy integral}), with $f$ as in (\ref{p_i}), are locally
Lipschitz continuous in $\Omega $ under the condition $p_{i}\geq 2$ for all $%
i=1,2,\ldots ,n$.

In our context of slow growth we emphasize the locally Lipschitz regularity
that we deduce by Theorem \ref{superlinear} when $1<p_{i}\leq 2$ for all $%
i=1,2,\ldots ,n$, which should make more complete the case considered by
Bousquet-Brasco \cite{Bousquet-Brasco 2019}. More precisely, we have to
change the model example $f\left( \xi \right) $ in (\ref{p_i}) since $f:%
\mathbb{R}^{n}\rightarrow \mathbb{R}$ there is not a function of class $%
C^{2} $ around $\xi =0$ when $p_{i}<2$ for some $i\in \left\{ 1,2,\ldots
,n\right\} $. The corresponding not-singular model is%
\begin{equation}
f\left( \xi \right) =\sum_{i=1}^{n}\left( 1+\xi _{i}^{2}\right) ^{\frac{p_{i}%
}{2}},\;\;\;\;\;\xi =\left( \xi _{1},\xi _{2},\ldots ,\xi _{n}\right) \in 
\mathbb{R}^{n}.  \label{not singular p_i}
\end{equation}%
Similarly to (\ref{quadratic form}) we obtain the quadratic form of the $%
n\times n$ matrix of the second derivatives $D^{2}f=\left( f_{\xi _{i}\xi
_{j}}\right) $ of $f$ in (\ref{not singular p_i}) 
\begin{equation}
\sum_{i,j=1}^{n}f_{\xi _{i}\xi _{j}}\left( \xi \right) \lambda _{i}\lambda
_{j}=\sum_{i=1}^{n}p_{i}\left( 1+\left( p_{i}-1\right) \xi _{i}^{2}\right)
\left( 1+\xi _{i}^{2}\right) ^{\frac{p_{i}}{2}-2}\left\vert \lambda
_{i}\right\vert ^{2}\,,\;\;\;\forall \;\lambda ,\xi \in \mathbb{R}^{n}.
\label{not singular quadratic form}
\end{equation}%
Since $p_{i}-2\leq 0$ for every $i\in \left\{ 1,2,\ldots ,n\right\} $, then 
\begin{equation*}
\begin{split}
\left( 1+\left( p_{i}-1\right) \xi _{i}^{2}\right) \left( 1+\xi
_{i}^{2}\right) ^{\frac{p_{i}}{2}-2}& \geq (p_{i}-1)\left( 1+\xi
_{i}^{2}\right) ^{\frac{p_{i}-2}{2}} \\
& \geq (p_{i}-1)\left( 1+\left\vert \xi \right\vert ^{2}\right) ^{\frac{%
p_{i}-2}{2}}\geq (p-1)\left( 1+\left\vert \xi \right\vert ^{2}\right) ^{%
\frac{p-2}{2}}
\end{split}%
\end{equation*}%
where $p=:\min \left\{ p_{i}:i=1,2,\ldots ,n\right\} .$We obtain 
\begin{equation}
\sum_{i,j=1}^{n}f_{\xi _{i}\xi _{j}}\left( \xi \right) \lambda _{i}\lambda
_{j}\geq p\left( p-1\right) \left( 1+\left\vert \xi \right\vert ^{2}\right)
^{\frac{p-2}{2}}\left\vert \lambda \right\vert ^{2}\,,\;\;\;\forall
\;\lambda ,\xi \in \mathbb{R}^{n}.
\label{ellipticity condition in the example}
\end{equation}%
Again for every $i\in \left\{ 1,2,\ldots ,n\right\} $, since $p_{i}-2\leq 0$
we also have 
\begin{equation*}
p_{i}\left( 1+\left( p_{i}-1\right) \xi _{i}^{2}\right) \left( 1+\xi
_{i}^{2}\right) ^{\frac{p_{i}}{2}-2}\leq p_{i}\left( 1+\xi _{i}^{2}\right) ^{%
\frac{p_{i}-2}{2}}\leq p_{i}
\end{equation*}%
and thus from (\ref{not singular quadratic form}) we deduce 
\begin{equation}
\sum_{i,j=1}^{n}f_{\xi _{i}\xi _{j}}\left( \xi \right) \lambda _{i}\lambda
_{j}\leq 2\left\vert \lambda \right\vert ^{2}\,,\;\;\;\forall \;\lambda ,\xi
\in \mathbb{R}^{n}.  \label{growth condition in the example}
\end{equation}%
Therefore, by (\ref{ellipticity condition in the example}) and (\ref{growth
condition in the example}), we have 
\begin{equation}
g_{1}\left( \left\vert \xi \right\vert \right) \left\vert \lambda
\right\vert ^{2}\leq \sum_{i,j=1}^{n}f_{\xi _{i}\xi _{j}}\left( \xi \right)
\lambda _{i}\lambda _{j}\leq g_{2}\left( \left\vert \xi \right\vert \right)
\left\vert \lambda \right\vert ^{2}\,,\;\;\;\forall \;\lambda ,\xi \in 
\mathbb{R}^{n},  \label{final ellipticity conditions in the example}
\end{equation}%
where $g_{1},g_{2}:\left[ 0,+\infty \right) \rightarrow \left( 0,+\infty
\right) $ are the nonnegative real functions defined by $g_{1}\left(
t\right) =p\left( p-1\right) \left( 1+t^{2}\right) ^{\frac{p-2}{2}}$ and $%
g_{2}\left( t\right) =g_{2}$ constantly equal to $2$. By Corollary \ref%
{theorem on p,q} with $q=2$ we obtain the further regularity result too.

\begin{corollary}[anisotropic energy integrals with slow growth]
Let $f=f\left( \xi \right) $ be the model convex function in 
\eqref{not
singular p_i}, with $1<p_{i}\leq 2$ for all $i=1,2,\ldots ,n$. If 
\begin{equation}
\frac{2}{p}<1+\frac{2}{n}\text{\ \ }\Leftrightarrow \text{\ \ }p>\frac{2n}{%
n+2},\text{\ \ \ where\ \ \ \ }p=:\min_{i\in \left\{ 1,2,\ldots ,n\right\}
}\left\{ p_{i}\right\} ,  \label{bound for the p_i}
\end{equation}%
then every local minimizer $u\in W_{\mathrm{loc}}^{1,p}\left( \Omega \right) 
$ to the energy integral \eqref{energy integral}, with $f\left( \xi \right) $
in \eqref{not singular p_i}, \textit{is of class }$W_{\mathrm{loc}%
}^{1,\infty }\left( \Omega \right) $\textit{\ and there exists a constant }$%
C>0$ depending only on $p,n,m,M$, such that, for \textit{all }$\rho ,R$%
\textit{\ with }$0<\rho <R\leq \rho +1$, 
\begin{equation*}
\left\Vert Du\left( x\right) \right\Vert _{L^{\infty }\left( B_{\rho };%
\mathbb{R}^{n}\right) }\leq \left( \frac{C}{\left( R-\rho \right) ^{n}}%
\int_{B_{R}}\left\{ 1+f\left( Du\right) \right\} \,dx\right) ^{\frac{2}{%
(n+2)p-2n}}.
\end{equation*}
\end{corollary}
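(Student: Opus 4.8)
The plan is to obtain the statement as a direct application of Corollary~\ref{theorem on p,q} with the choice $q=2$, so the whole argument reduces to verifying the hypotheses of that corollary for the model function $f$ in \eqref{not singular p_i}. First I would record the structural facts: each summand $t\mapsto(1+t^{2})^{p_{i}/2}$ belongs to $C^{\infty}(\real)$ and is convex for $p_{i}\ge 1$, since its second derivative $p_{i}\,(1+(p_{i}-1)t^{2})(1+t^{2})^{p_{i}/2-2}$ is nonnegative; hence $f\in C^{\infty}(\real^{n})$ is convex, and the regularity and convexity requirements of Corollary~\ref{theorem on p,q} hold with $t_{0}=0$.

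Next I would invoke the two elementary estimates already derived in the discussion preceding the statement. Bounding each diagonal coefficient in \eqref{not singular quadratic form} from below and passing to the Euclidean modulus gives \eqref{ellipticity condition in the example}, that is, the left inequality in \eqref{ellipticity conditions under p,q} with exponent $p:=\min_{i}p_{i}$ and constant $m=p(p-1)>0$. Since every $p_{i}-2\le 0$, the same diagonal coefficients are majorised by $p_{i}\le 2$, which yields \eqref{growth condition in the example}; rewriting $2|\lambda|^{2}=2(1+|\xi|^{2})^{(2-2)/2}|\lambda|^{2}$ this is the right inequality in \eqref{ellipticity conditions under p,q} with $q=2$ and $M=2$. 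Thus $f$ fits the setting of Corollary~\ref{theorem on p,q} with the exponent pair $1<p\le q=2$.

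It then remains to translate the exponent condition. A lower $p$-growth bound for $f$ follows from $p_{i}\ge p$ and $1+\xi_{i}^{2}\ge 1$, namely $f(\xi)\ge\sum_{i}(1+\xi_{i}^{2})^{p/2}\ge\sum_{i}|\xi_{i}|^{p}\ge c_{n,p}(1+|\xi|^{p})$ after a harmless renormalisation, so that $f(Du)\in L^{1}_{\mathrm{loc}}(\Omega)$ forces $Du\in L^{p}_{\mathrm{loc}}(\Omega)$ and $W^{1,p}_{\mathrm{loc}}(\Omega)$ is indeed the natural class. Condition \eqref{q/p for p,q<=2} with $q=2$ reads $2/p<1+2/n$, which is exactly \eqref{bound for the p_i}; applying Corollary~\ref{theorem on p,q} then gives local Lipschitz continuity together with the estimate \eqref{bound for p,q<=2}, whose exponent $\frac{2}{(n+2)p-nq}$ specialises to $\frac{2}{(n+2)p-2n}$, which is the claimed inequality.

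As for the main point of difficulty: there is no new analytic obstacle, since all the substantial work is already contained in Theorem~\ref{superlinear} and in its Corollary~\ref{theorem on p,q}. The only step that genuinely requires care is the uniform lower bound that replaces the merely positive semidefinite anisotropic Hessian by the definite quadratic form $p(p-1)(1+|\xi|^{2})^{(p-2)/2}|\lambda|^{2}$; this is precisely what makes the slow-growth range $1<p_{i}\le 2$ amenable to the present method, in contrast with the range $p_{i}\ge 2$, where the corresponding majorant $g_{2}$ is increasing and the monotonicity hypothesis \eqref{H}$_{2}$ on $t\mapsto t^{\mu}g_{2}(t)$ fails.
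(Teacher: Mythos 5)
Your proposal is correct and follows exactly the paper's own route: verify the ellipticity conditions \eqref{ellipticity conditions under p,q} with $p=\min_i p_i$ and $q=2$ via the bounds already derived in the text, and then invoke Corollary~\ref{theorem on p,q}. The only cosmetic slip is the constant $m$: since the lower bound proved is $p(p-1)(1+|\xi|^2)^{(p-2)/2}$ rather than $p(p-1)|\xi|^{p-2}$, you should take $m=p(p-1)2^{(p-2)/2}$ (using $1+|\xi|^2\le 2|\xi|^2$ for $|\xi|\ge 1$), but this does not affect the argument.
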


Note that when $n=2$ the bound in (\ref{bound for the p_i}) simply reduces
to $1<p_{i}\leq 2$ for all $i=1,2,\ldots ,n$. More generally we can consider
energy integrands of the form 
\begin{equation}
f\left( \xi \right) =\sum_{i=1}^{n}g\left( \xi _{i}\right) ,\;\;\;\;\text{or}%
\;\;\;\;f\left( \xi \right) =\sum_{i=1}^{n}g_{i}\left( \xi _{i}\right) ,
\label{general integrants}
\end{equation}%
where, for instance, $g\left( t\right) $ or $g_{i}\left( t\right) $ are one
of the functions considered above in Examples \ref{log} and \ref{loglog}.
\end{example}

\section{New examples of anisotropic energy functions\label{Section: New
examples of anisotropic energy functions}}

\smallskip We provide some applications of our Theorem \ref{superlinear} and
we infer the Lipschitz continuity of the local minimizers to some class of
functionals with anisotropic behaviour.

\begin{example}
\label{p-q-r-s} Consider 
\begin{equation}
f\left( \xi \right) =\sqrt{\sum_{i=1}^{n}\left( 1+\left\vert \xi
_{i}\right\vert ^{2}\right) ^{p_{i}}},\qquad p_{i}>1,\;\;\;\forall
\;i=1,\dots ,n.  \label{r-s}
\end{equation}%
With the same argument of Example \ref{orthotopic} we have 
\begin{equation*}
\frac{1}{\sqrt{n^{p}}}\,\left( 1+|\xi |^{2}\right) ^{\frac{p}{2}}\leq f(\xi
)\leq \sqrt{n}\,\left( 1+|\xi |^{2}\right) ^{\frac{q}{2}}
\end{equation*}%
where $p=\min_{i}p_{i}$ and $q=\max_{i}p_{i}$. Let us denote by $Q\left( \xi
,\lambda \right) $ the quadratic form 
\begin{equation}
Q\left( \xi ,\lambda \right) =\sum_{i,j=1}^{n}f_{\xi _{i}\xi _{i}}\left( \xi
\right) \lambda _{i}\lambda _{j}\,,\;\;\;\;\forall \;\lambda ,\xi \in 
\mathbb{R}^{n}.  \label{Q}
\end{equation}%
We have 
\begin{equation*}
\begin{split}
f_{\xi _{i}\xi _{i}}& =-\frac{p_{i}^{2}\xi _{i}^{2}(1+\xi
_{i}^{2})^{2p_{i}-2}}{\left( \sum_{k=1}^{n}(1+\xi _{k}^{2})^{p_{k}}\right) ^{%
\frac{3}{2}}}+\frac{p_{i}(1+\xi _{i}^{2})^{p_{i}-2}\left( 1+(2p_{i}-1)\xi
_{i}^{2}\right) }{\left( \sum_{i=1}^{n}(1+\xi _{i}^{2})^{p_{i}}\right) ^{%
\frac{1}{2}}},\qquad i=1,\dots n, \\
f_{\xi _{i}\xi _{j}}& =-\frac{p_{i}p_{j}\xi _{i}\xi _{j}(1+\xi
_{i}^{2})^{p_{i}-1}(1+\xi _{j}^{2})^{p_{j}-1}}{\left( \sum_{k=1}^{n}(1+\xi
_{k}^{2})^{p_{k}}\right) ^{\frac{3}{2}}},\qquad i,j=1,\dots n,\;\;i\neq j,
\end{split}%
\end{equation*}%
and then 
\begin{equation*}
\begin{split}
Q\left( \xi ,\lambda \right) \left( \sum_{k=1}^{n}(1+\xi
_{k}^{2})^{p_{k}}\right) ^{\frac{3}{2}}=& -(v\cdot w)^{2} \\
& +\left( \sum_{k=1}^{n}(1+\xi _{k}^{2})^{p_{k}}\right)
\sum_{i=1}^{n}p_{i}(1+\xi _{i}^{2})^{p_{i}-2}\left( 1+(2p_{i}-1)\xi
_{i}^{2}\right) \lambda _{i}^{2},
\end{split}%
\end{equation*}%
where $v_{i}=p_{i}\xi _{i}(1+\xi _{i}^{2})^{\frac{p_{i}}{2}-1}\lambda _{i}$
and $w_{i}=(1+\xi _{i}^{2})^{\frac{p_{i}}{2}}$. Therefore 
\begin{equation*}
\begin{split}
Q\left( \xi ,\lambda \right) & \left( \sum_{k=1}^{n}(1+\xi
_{k}^{2})^{p_{k}}\right) ^{\frac{1}{2}}\leq \sum_{i=1}^{n}p_{i}(1+\xi
_{i}^{2})^{p_{i}-2}\left( 1+(2p_{i}-1)\xi _{i}^{2}\right) \lambda _{i}^{2} \\
& \leq (2q^{2}-q)\sum_{i=1}^{n}\left[ (1+\xi _{i}^{2})^{p_{i}}\right] ^{1-%
\frac{1}{p_{i}}}\lambda _{i}^{2}\leq 2q^{2}\left[ \sum_{k=1}^{n}(1+\xi
_{k}^{2})^{p_{k}}\right] ^{1-\frac{1}{q}}|\lambda |^{2}.
\end{split}%
\end{equation*}%
For $|\xi |\geq 1$, if $q\leq 2$ we have 
\begin{equation}
\begin{split}
Q\left( \xi ,\lambda \right) & \leq 2q^{2}\left( \sum_{k=1}^{n}(1+\xi
_{k}^{2})^{p_{k}}\right) ^{\frac{q-2}{2q}}|\lambda |^{2}\leq 2q^{2}\left( 1+%
\frac{1}{n}|\xi |^{2}\right) ^{p\frac{q-2}{2q}}|\lambda |^{2} \\
& \leq C|\xi |^{\frac{p}{q}(q-2)}|\lambda |^{2},
\end{split}
\label{upper bound}
\end{equation}%
instead, if $q\geq 2$ we obtain 
\begin{equation*}
Q\left( \xi ,\lambda \right) \leq 2q^{2}\left( \sum_{k=1}^{n}(1+\xi
_{k}^{2})^{p_{k}}\right) ^{\frac{q-2}{2q}}|\lambda |^{2}\leq 2q^{2}\left(
n\left( 1+|\xi |^{2}\right) ^{q}\right) ^{\frac{q-2}{2q}}|\lambda |^{2}\leq
C|\xi |^{q-2}|\lambda |^{2}.
\end{equation*}%
Moreover, since 
\begin{equation*}
(v\cdot w)^{2}\leq |v|^{2}|w|^{2}=\sum_{i=1}^{n}p_{i}^{2}\xi _{i}^{2}(1+\xi
_{i}^{2})^{p_{i}-2}\lambda _{i}^{2}\sum_{k=1}^{n}(1+\xi _{k}^{2})^{p_{k}}
\end{equation*}%
we have 
\begin{equation*}
\begin{split}
Q\left( \xi ,\lambda \right) & \left( \sum_{k=1}^{n}(1+\xi
_{k}^{2})^{p_{k}}\right) ^{\frac{1}{2}}\geq -\sum_{i=1}^{n}p_{i}^{2}\xi
_{i}^{2}(1+\xi _{i}^{2})^{p_{i}-2}\lambda _{i}^{2} \\
& +\sum_{i=1}^{n}p_{i}(1+\xi _{i}^{2})^{p_{i}-2}\left( 1+(2p_{i}-1)\xi
_{i}^{2}\right) \lambda _{i}^{2}=\sum_{i=1}^{n}p_{i}(1+\xi
_{i}^{2})^{p_{i}-2}\left( 1+(p_{i}-1)\xi _{i}^{2}\right) \lambda _{i}^{2}.
\end{split}%
\end{equation*}%
For every $q>1$ and $|\xi |\geq 1$ we deduce 
\begin{equation}
\begin{split}
Q\left( \xi ,\lambda \right) & \geq \left( \sum_{k=1}^{n}(1+\xi
_{k}^{2})^{p_{k}}\right) ^{-\frac{1}{2}}(p^{2}-p)\sum_{i=1}^{n}(1+\xi
_{i}^{2})^{p_{i}-1}\lambda _{i}^{2} \\
& \geq \frac{p^{2}-p}{\sqrt{n}}(1+\max_{i}\{|\xi _{i}|\}^{2})^{p-1-\frac{q}{2%
}}|\lambda |^{2}\geq c|\xi |^{2p-2-q}|\lambda |^{2}.
\end{split}
\label{lower bound}
\end{equation}%
We note explicitly that if $1<p<q$ then $2p-2-q<p-2$. Therefore, by denoting 
\begin{equation}
r=2p-q\qquad \text{ and }\qquad s=\frac{p}{q}(q-2)+2  \label{r and s}
\end{equation}%
with $r\leq p\leq q\leq s\leq 2$, by \eqref{upper bound} and 
\eqref{lower
bound} we obtain that $f\left( \xi \right) $ in \eqref{p-q-r-s} satisfies
the assumptions \eqref{H}$_{1}$ and \eqref{H}$_{2}$ with 
\begin{equation}
g_{1}(t)=c\,t^{r-2}\qquad \text{ and }\qquad g_{2}(t)=C\,t^{s-2}.
\label{g1g2 in the example}
\end{equation}%
Therefore the function $f\left( \xi \right) $ in \eqref{p-q-r-s} satisfies
all assumptions in \eqref{H} with 
\begin{equation*}
\mu ={2-s},\qquad \beta =\frac{n-2}{2n}\,s-\frac{r}{2}+\frac{2}{n}\qquad 
\text{ and }\qquad \alpha =\frac{s}{p},
\end{equation*}%
when we impose the bounds 
\begin{equation*}
\alpha <\frac{2-\mu }{n\beta -\mu }\quad \iff \quad s<\frac{2}{n}\,p+r.
\end{equation*}%
We are in the conditions to apply Theorem~\ref{superlinear}. In the next
Corollary \ref{ex1-sec4} we state what we have proved by the computations
above for this example, about the energy integral 
\begin{equation}
F_{2}\left( u\right) =\int_{\Omega }{\left( \sum_{i=1}^{n}\left(
1+\left\vert u_{x_{i}}\right\vert ^{2}\right) ^{p_{i}}\right) ^{\frac{1}{2}}}%
dx\,.  \label{F2}
\end{equation}
\end{example}

\begin{corollary}
\label{ex1-sec4} Let $1<p=\min_{i}p_{i}\leq q=\max_{i}p_{i}\leq 2$ and $r,s$
as in (\ref{r and s}). If 
\begin{equation}
s<\frac{2}{n}\,p+r\quad \iff \quad \frac{q}{p}<1+\frac{2}{n}-2\left( \frac{1%
}{p}-\frac{1}{q}\right) \,,  \label{condition ex1}
\end{equation}%
then the local minimizers of the energy integral $F_{2}$ in \eqref{F2} are
locally Lipschitz continuous in $\Omega $.
\end{corollary}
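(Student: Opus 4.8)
The heavy lifting has already been carried out in Example~\ref{p-q-r-s}, where the Hessian of the integrand $f(\xi)=\bigl(\sum_{i=1}^{n}(1+|\xi_i|^{2})^{p_i}\bigr)^{1/2}$ of $F_{2}$ is computed explicitly and its quadratic form $Q(\xi,\lambda)$ is estimated from both sides. So the plan is to collect those estimates into the structure conditions \eqref{H}, to verify the balance condition \eqref{ab}, and then to invoke Theorem~\ref{superlinear}. First I would record that $f\in\mathcal C^{\infty}(\mathbb R^{n})$ (each $1+\xi_i^{2}$ is bounded below by $1$, so no singularity arises, neither inside the square root nor from the powers $p_i>1$) and that $f$ is convex on \emph{all} of $\mathbb R^{n}$: indeed the lower bound for $Q$ obtained in Example~\ref{p-q-r-s} reads $Q(\xi,\lambda)\bigl(\sum_{k}(1+\xi_k^{2})^{p_k}\bigr)^{1/2}\ge\sum_{i}p_i(1+\xi_i^{2})^{p_i-2}\bigl(1+(p_i-1)\xi_i^{2}\bigr)\lambda_i^{2}$, whose right-hand side is nonnegative for every $\xi$ since $p_i>1$. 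Hence $f\in\mathcal C(\mathbb R^{n})\cap\mathcal C^{2}(\mathbb R^{n}\setminus B_{t_0}(0))$ with $t_0=1$. The same example supplies, for $|\xi|\ge1$, the two-sided bound $c\,|\xi|^{r-2}|\lambda|^{2}\le Q(\xi,\lambda)\le C\,|\xi|^{s-2}|\lambda|^{2}$ with $r=2p-q$ and $s=\tfrac{p}{q}(q-2)+2$, together with $n^{-p/2}(1+|\xi|^{2})^{p/2}\le f(\xi)\le\sqrt n\,(1+|\xi|^{2})^{q/2}$; thus \eqref{H}$_1$ holds with $g_1(t)=c\,t^{r-2}$, $g_2(t)=C\,t^{s-2}$, and the last requirement in \eqref{H}, i.e.\ $f(\xi)/|\xi|\to+\infty$, holds because $f(\xi)\ge n^{-p/2}|\xi|^{p}$ with $p>1$.

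It then remains to pick the exponents $\mu,\beta,\alpha$ and to check \eqref{H}$_2$--\eqref{H}$_4$ and \eqref{ab}. I would take $\mu=2-s$: recalling $r\le p\le q\le s\le2$ (established in Example~\ref{p-q-r-s}) and that $p>1$ forces $s>1$, equivalently $2p<q(1+p)$, which holds since $q\ge p>1$, one gets $\mu\in[0,1)$, while $t^{\mu}g_2(t)\equiv C$ is (non-strictly) decreasing and $t\,g_2(t)=C\,t^{s-1}$ is increasing, so \eqref{H}$_2$ holds. I would take $\alpha=s/p$ (slightly increased in the degenerate case $p=q$, where $s=p$, so as to keep $\alpha>1$): then for $|\xi|\ge1$ one has $g_2(|\xi|)|\xi|^{2}=C|\xi|^{s}\le C_2\bigl(n^{-p/2}|\xi|^{p}\bigr)^{\alpha}\le C_2[1+f(\xi)]^{\alpha}$, which is \eqref{H}$_4$. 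Finally, inserting the power laws into \eqref{H}$_3$ reduces it to the inequality between exponents $2\beta+r-2\ge(s-2)\tfrac{n-2}{n}$, i.e.\ $\beta\ge\beta_{\ast}:=\tfrac{n-2}{2n}s-\tfrac r2+\tfrac2n$ (the value quoted in Example~\ref{p-q-r-s}), while a short computation shows that \eqref{ab} with these $\mu,\alpha$ is equivalent to $\beta<\tfrac{p+2-s}{n}$. The key point is that $\beta_{\ast}<\tfrac{p+2-s}{n}$ is \emph{exactly} the hypothesis $s<\tfrac2n p+r$ of \eqref{condition ex1}, and moreover $\tfrac{p+2-s}{n}\le\tfrac2n$ (since $p\le s$) and $\tfrac1n<\tfrac{p+2-s}{n}$ (since $s\le2<p+1$); hence the interval $\bigl(\max\{\beta_{\ast},\tfrac1n\},\ \min\{\tfrac2n,\tfrac{p+2-s}{n}\}\bigr)$ is nonempty and any $\beta$ in it fulfils both \eqref{H}$_3$ and \eqref{ab}. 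For $n=2$ one replaces $\tfrac{n-2}{n}$ by a fixed positive number less than $1-\beta$, as prescribed after \eqref{H}, and the computation is entirely analogous.

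Once all of \eqref{H} together with \eqref{ab} are in force, Theorem~\ref{superlinear} applies directly and gives the local Lipschitz continuity of every local minimizer of $F_2$ in $\Omega$, along with the quantitative estimate \eqref{infinity nu} for the above exponents. I do not expect any genuine analytic obstacle: Theorem~\ref{superlinear} is used as a black box, and the Hessian estimates are already available from Example~\ref{p-q-r-s}. The only slightly delicate step, and the one I would write out most carefully, is the elementary verification that the admissible range of $\beta$ is nonempty precisely under \eqref{condition ex1} --- this is exactly where the equivalence $s<\tfrac2n p+r\iff\tfrac qp<1+\tfrac2n-2\bigl(\tfrac1p-\tfrac1q\bigr)$ recorded in the corollary is used.
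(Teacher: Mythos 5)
Your proposal is correct and follows essentially the same route as the paper: take $\mu=2-s$, $\alpha=s/p$, $\beta\approx\beta_*=\tfrac{n-2}{2n}s-\tfrac r2+\tfrac 2n$ derived from the power-law Hessian bounds of Example~\ref{p-q-r-s}, and verify that \eqref{ab} reduces exactly to $s<\tfrac 2n p+r$. The one place where you go beyond what the paper writes out is in verifying that an admissible $\beta$ actually exists in the window $\bigl(\max\{\beta_*,\tfrac 1n\},\ \min\{\tfrac 2n,\tfrac{p+2-s}{n}\}\bigr)$ — the paper simply sets $\beta=\beta_*$ and does not check $\beta_*\in(\tfrac 1n,\tfrac 2n)$ — and in noting that in the degenerate case $p=q$ one has $\alpha=s/p=1$ and must perturb $\alpha$ slightly to keep $\alpha>1$; both are genuine (if minor) gaps in the paper's exposition that your argument closes.
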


\begin{remark}
\label{Remark}At a first glance we may think that the assumption (\ref%
{condition ex1}) of Corollary \ref{ex1-sec4} is more restrictive that the
similar assumption $\frac{q}{p}<1+\frac{2}{n}$ in Corollary \ref{theorem on
p,q}, valid under the general $p,q-$growth. But, if we apply correctly
Corollary \ref{theorem on p,q} to $F_{2}$, on the contrary we had a more
restrictive assumption than the above condition (\ref{condition ex1}). In
fact by (\ref{g1g2 in the example}) we have here $g_{1}\left( t\right)
=c\,t^{r-2}$, $g_{2}\left( t\right) =C\,t^{s-2}$ and the estimate of the
quadratic form \eqref{H}$_{1}$ becomes 
\begin{equation}
c\,\left\vert \xi \right\vert ^{r-2}\left\vert \lambda \right\vert ^{2}\leq
\sum_{i,j=1}^{n}f_{\xi _{i}\xi _{j}}\left( \xi \right) \lambda _{i}\lambda
_{j}\leq C\left\vert \xi \right\vert ^{s-2}\left\vert \lambda \right\vert
^{2},\quad \forall \;\lambda ,\xi \in 
\mathbb{R}^{n}:\left\vert \xi \right\vert \geq 1\,.  
\label{g1g2}
\end{equation}%
Then Corollary \ref{theorem on p,q} applied to $F_{2}$ gives the regularity
of minimizers under the bound $s<\left( 1+\frac{2}{n}\right) r=\frac{2}{n}%
\,r+r$, which is a more restrictive condition than the above assumption (\ref%
{condition ex1}) $s<\frac{2}{n}\,p+r$, since $r=2p-q=p+\left( p-q\right) <p$
when $p<q$.

Therefore the general bound of Corollary \ref{theorem on p,q} gives a less
precise result than Theorem~\ref{superlinear} when applies to the energy
integral (\ref{F2}). This fact also shows that Theorem~\ref{superlinear}
gives a more general regularity result than similar results that can be
found in the recent mathematical literature on $p,q-$growth.
\end{remark}

\bigskip 

\begin{example}
\label{degenere} Let 
\begin{equation}
h(\xi )=\sqrt{\sum_{i=1}^{n}|\xi _{i}|^{2p_{i}}},\qquad p_{i}\geq 1,
\label{h}
\end{equation}%
$p=\min_{i}p_{i}$ and $q=\max_{i}p_{i}\leq 2$, and 
\begin{equation}
\overline{Q}\left( \xi ,\lambda \right) =\sum_{i,j=1}^{n}h_{\xi _{i}\xi
_{j}}(\xi )\lambda _{i}\lambda _{j}\,,\;\;\;\;\forall \;\lambda ,\xi \in 
\mathbb{R}^{n}.  \label{Q segnato}
\end{equation}%
We prove that the associated quadratic form to $h$ is semidefinite, i.e. $%
\overline{Q}\left( \xi ,\lambda \right) \geq 0$. In fact 
\begin{equation}
\begin{split}
\overline{Q}\left( \xi ,\lambda \right) \left( \sum_{k=1}^{n}|\xi
_{k}|^{2p_{k}}\right) ^{\frac{3}{2}}=& -\sum_{i,j=1}^{n}\mathrm{sign}(\xi
_{i}\xi _{j})p_{i}p_{j}|\xi _{i}|^{2p_{i}-1}|\xi _{j}|^{2p_{j}-1}\lambda
_{i}\lambda _{j} \\
& +\sum_{k=1}^{n}|\xi _{k}|^{2p_{k}}\sum_{i=1}^{n}(2p_{i}^{2}-p_{i})|\xi
_{i}|^{2p_{i}-2}\lambda _{i}^{2}.
\end{split}
\label{Q segnato espressione}
\end{equation}%
By proceeding as above we have 
\begin{equation*}
\overline{Q}\left( \xi ,\lambda \right) \left( \sum_{k=1}^{n}|\xi
_{k}|^{2p_{k}}\right) ^{\frac{3}{2}}=-(v\cdot w)^{2}+\sum_{k=1}^{n}|\xi
_{k}|^{2p_{k}}\sum_{i=1}^{n}(2p_{i}^{2}-p_{i})|\xi _{i}|^{2p_{i}-2}\lambda
_{i}^{2},
\end{equation*}%
$v_{i}=p_{i}|\xi _{i}|^{p_{i}-1}\lambda _{i}$ and $w_{i}=\mathrm{sign}(\xi
_{i})|\xi _{i}|^{p_{i}}$. In this case the quadratic form $\overline{Q}$ is
degenerate ($\overline{Q}\left( \xi ,\lambda \right) =0$ if $(\xi \cdot
\lambda )=0$) but positive semidefinite: 
\begin{equation*}
\overline{Q}\left( \xi ,\lambda \right) \left( \sum_{k=1}^{n}|\xi
_{k}|^{2p_{k}}\right) ^{\frac{3}{2}}\geq \sum_{k=1}^{n}|\xi
_{k}|^{2p_{k}}\sum_{i=1}^{n}(p_{i}^{2}-p_{i})|\xi _{i}|^{2p_{i}-2}\lambda
_{i}^{2}\geq 0.
\end{equation*}%
On the other hand, if $\max_i \{|\xi _{i}|\}\geq 1$, 
\begin{equation*}
\begin{split}
\overline Q(\xi\,,\lambda)\left(\sum_{k=1}^{n}|\xi_k|^{2p_k}\right)^\frac 12
& \leq \sum_{i=1}^n (2p_i^2-p_i)|\xi_i|^{2p_i-2}\lambda_i^2 = \sum_{i=1}^n
(2p_i^2-p_i)(|\xi_i|^{2p_i})^{1-\frac 1{p_i}}\lambda_i^2 \\
& \leq (2q^2-q) \sum_{i=1}^n \left(\sum_{k=1}^n
|\xi_k|^{2p_k}\right)^{1-\frac 1{p_i}}\lambda_i^2
\end{split}%
\end{equation*}
since $\sum_{k=1}^n |\xi_k|^{2p_k}\geq \left(\max_i \{|\xi
_{i}|\}\right)^{2p}\geq 1$, 
\begin{equation*}
\overline Q(\xi\,,\lambda)\left(\sum_{k=1}^{n}|\xi_k|^{2p_k}\right)^\frac 12
\leq (2q^2-q)\left(\sum_{k=1}^n |\xi_k|^{2p_k}\right)^{1-\frac
1{q}}|\lambda|^2.
\end{equation*}
Now, again using $\max_i \{|\xi _{i}|\}\geq 1$, 
\begin{equation*}
\sum_{k=1}^n |\xi_k|^{2p_k}\geq (\max_i \{|\xi _{i}|\})^{2p}
\quad\Longrightarrow\quad \left(\sum_{k=1}^n |\xi_k|^{2p_k}\right)^{\frac
12-\frac 1{q}}\leq \left(\max_i \{|\xi _{i}|\}^{2p}\right)^{\frac 12-\frac
1{q}},
\end{equation*}
therefore 
\begin{equation*}
\begin{split}
\overline Q(\xi\,,\lambda) & \leq (2q^2-q)\left(\sum_{k=1}^n
|\xi_k|^{2p_k}\right)^{\frac 12-\frac 1{q}}|\lambda|^2 \leq
(2q^2-q)\left(\max_i \{|\xi _{i}|\}\right)^{p\frac {q-2}q}|\lambda|^2 \\
& \leq C|\xi|^{p\frac {q-2}q}|\lambda|^2.
\end{split}%
\end{equation*}
In this case $\overline{Q}\left( \xi ,\lambda \right) \leq C|\xi
|^{q-2}|\lambda |^{2}$ when $q\geq 2$. We denote by 
\begin{equation}
s=\frac{p}{q}(q-2)+2  \label{s}
\end{equation}%
with $1<p=\min_{i}p_{i}\leq q=\max_{i}p_{i}\leq 2$. We consider the
ellipticity conditions \eqref{ellipticity conditions under p,q}, with $s$
replaced by $q$, for the function 
\begin{equation}
f(\xi )=|\xi |^{p}+h(\xi ).  \label{s-p}
\end{equation}%
Since 
\begin{equation*}
\frac{s}{p}<1+\frac{2}{n}\quad \iff \quad \frac{q}{p}<1+\frac{q}{n}\,,
\end{equation*}%
from Corollary~\ref{theorem on p,q} we obtain the proof of a further
regularity result for the following energy integral 
\begin{equation}
F_{3}(u)=\int_{\Omega }{|Du|^{p}+\left( \sum_{i=1}^{n}\left\vert
u_{x_{i}}\right\vert ^{2p_{i}}\right) ^{\frac{1}{2}}}dx.  \label{F3}
\end{equation}

\begin{corollary}
\label{ex2-sec4} If $1<p=\min_{i}p_{i}\leq q=\max_{i}p_{i}\leq 2$ satisfy 
\begin{equation}
\frac{q}{p}<1+\frac{q}{n}\quad \iff \quad q<p^{\ast }=:\frac{np}{n-p}\,,
\label{condition ex2}
\end{equation}%
then any local minimizers of $F_{3}$ in \eqref{F3} is locally Lipschitz
continuous in $\Omega $.
\end{corollary}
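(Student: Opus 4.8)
The plan is to reduce Corollary \ref{ex2-sec4} to Corollary \ref{theorem on p,q} by showing that the integrand $f(\xi)=|\xi|^{p}+h(\xi)$ of $F_{3}$ satisfies the ellipticity conditions \eqref{ellipticity conditions under p,q} with the \emph{effective} exponents $p$ and $s$, where $s=\frac{p}{q}(q-2)+2$ is the quantity in \eqref{s}. All the delicate analysis of the degenerate anisotropic part $h$ has already been carried out in Example \ref{degenere}: there it is proved that the Hessian quadratic form $\overline{Q}(\xi,\lambda)=\sum_{i,j}h_{\xi_{i}\xi_{j}}(\xi)\lambda_{i}\lambda_{j}$ is positive semidefinite for every $\xi\neq0$, and that $\overline{Q}(\xi,\lambda)\leq C|\xi|^{s-2}|\lambda|^{2}$ whenever $\max_{i}|\xi_{i}|\geq1$ (recall that $p\frac{q-2}{q}=s-2$). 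What remains is essentially bookkeeping.

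First I would record that $f\in C(\mathbb{R}^{n})\cap C^{2}(\mathbb{R}^{n}\setminus B_{t_{0}}(0))$ and is convex: $|\xi|^{p}$ with $1<p\leq2$ is convex and of class $C^{2}$ away from the origin; $h$ is convex (it is the $\ell^{2}$-norm of the convex nonnegative functions $|\xi_{i}|^{2p_{i}}$), and since each $t\mapsto|t|^{2p_{i}}$ is $C^{2}$ for $p_{i}\geq1$ and $\sqrt{\,\cdot\,}$ is smooth on $(0,+\infty)$, also $h$ is $C^{2}$ away from the origin; thus one may take any $t_{0}\geq\sqrt{n}$, so that $|\xi|\geq t_{0}$ forces both $|\xi|\geq1$ and $\max_{i}|\xi_{i}|\geq1$. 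Next I would use the elementary pointwise Hessian bounds for the $p$-energy, valid for $1<p\leq2$ and $\xi\neq0$:
\[
p(p-1)\,|\xi|^{p-2}|\lambda|^{2}\;\leq\;\sum_{i,j=1}^{n}\bigl(|\xi|^{p}\bigr)_{\xi_{i}\xi_{j}}\lambda_{i}\lambda_{j}\;\leq\;p\,|\xi|^{p-2}|\lambda|^{2},\qquad\forall\,\lambda\in\mathbb{R}^{n}.
\]
Adding the two Hessians, using $\overline{Q}\geq0$ for the lower estimate and, for the upper estimate, both $\overline{Q}(\xi,\lambda)\leq C|\xi|^{s-2}|\lambda|^{2}$ and $|\xi|^{p-2}\leq|\xi|^{s-2}$ for $|\xi|\geq1$ (legitimate since $p\leq s$ because $q\geq p$), I obtain, for every $|\xi|\geq t_{0}$,
\[
p(p-1)\,|\xi|^{p-2}|\lambda|^{2}\;\leq\;\sum_{i,j=1}^{n}f_{\xi_{i}\xi_{j}}(\xi)\lambda_{i}\lambda_{j}\;\leq\;(p+C)\,|\xi|^{s-2}|\lambda|^{2}.
\]
Finally, since $s\leq2$ and $|\xi|\geq t_{0}\geq1$ yield $|\xi|^{s-2}\leq2^{(2-s)/2}(1+|\xi|^{2})^{(s-2)/2}$, the upper bound acquires exactly the form $M(1+|\xi|^{2})^{(s-2)/2}|\lambda|^{2}$ appearing in \eqref{ellipticity conditions under p,q}, with $s$ in place of $q$; moreover $1<p\leq s\leq2$ (the last inequality because $q\leq2$), so the pair $(p,s)$ is admissible there.

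It only remains to match the exponent restriction. Substituting $s=p-\frac{2p}{q}+2$, the requirement $\frac{s}{p}<1+\frac{2}{n}$ of Corollary \ref{theorem on p,q} becomes $\frac{1}{p}-\frac{1}{q}<\frac{1}{n}$, which is equivalent to $\frac{q}{p}<1+\frac{q}{n}$, i.e. to $q<p^{\ast}=\frac{np}{n-p}$: precisely hypothesis \eqref{condition ex2}. Hence Corollary \ref{theorem on p,q} applies and yields that every local minimizer $u$ of $F_{3}$ — which automatically lies in $W^{1,p}_{\mathrm{loc}}(\Omega)$, since $f(Du)\in L^{1}_{\mathrm{loc}}$ forces $|Du|^{p}\in L^{1}_{\mathrm{loc}}$ — belongs to $W^{1,\infty}_{\mathrm{loc}}(\Omega)$, together with the quantitative bound \eqref{bound for p,q<=2} with $q$ replaced by $s$. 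The only genuinely nontrivial point, which was already settled in Example \ref{degenere}, is that the strong degeneracy of $h$ forces the entire lower ellipticity bound to come from the $|\xi|^{p}$ term; it is the resulting competition between the good exponent $p$ from below and the larger effective exponent $s$ from above that produces the sharp threshold $q<p^{\ast}$.
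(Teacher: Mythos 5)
Your proposal is correct and follows essentially the same route the paper intends: you verify the ellipticity sandwich \eqref{ellipticity conditions under p,q} for $f=|\xi|^p+h$ with the effective pair $(p,s)$, $s=\tfrac pq(q-2)+2$, using $\overline{Q}\geq 0$ and the upper estimate on $\overline Q$ established in Example~\ref{degenere}, and then translate $\tfrac sp<1+\tfrac 2n$ into \eqref{condition ex2}. The paper states this reduction very tersely, so your write-up supplies precisely the bookkeeping it leaves implicit (the elementary Hessian eigenvalue bounds for $|\xi|^p$, the inequality $p\le s\le 2$, the passage from $|\xi|^{s-2}$ to $(1+|\xi|^2)^{(s-2)/2}$, and the membership $u\in W^{1,p}_{\mathrm{loc}}$).
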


We can consider also different integrands related with $h$ in \eqref{h}. By
taking in account Example~\ref{log} and Example~\ref{loglog}, we can
consider, for $1<q\leq 2$, $s=3-\frac{2}{q}\geq q$, 
\begin{equation}
f(\xi )=|\xi |(\log |\xi |)^{a}+\sqrt{\sum_{i=1}^{n-1}|\xi _{i}|^{2}+|\xi
_{n}|^{2q}},\qquad a>0,  \label{h+log}
\end{equation}%
or 
\begin{equation}
f(\xi )=|\xi |L_{k}\left( |\xi |\right) +\sqrt{\sum_{i=1}^{n-1}|\xi
_{i}|^{2}+|\xi _{n}|^{2q}},  \label{h+loglog}
\end{equation}%
Assumption \eqref{H} holds with $g_{2}(t)=Ct^{s-2}$ and respectively 
\begin{equation*}
g_{1}(t)=c\,\frac{(\log t)^{a-1}}{t}\quad \text{ or }\quad g_{1}(t)=\frac{c}{%
(1+t)(1+L_{1}(t))\cdots (1+L_{k-1}(t))}\,,
\end{equation*}%
\begin{equation*}
\mu =2-s,\quad \beta >\frac{n-2}{2n}s-\frac{1}{2}+\frac{2}{n},\quad \alpha
>s\,.
\end{equation*}%
Therefore, if $s<1+\frac{2}{n}$, by Theorem~\ref{superlinear} the
corresponding local minimizers are locally Lipschitz continuous.
\end{example}

\section{Interpolation lemma}

\label{Preliminary lemma}

As usual we denote by $B_{R}$ a generic ball of radius $R$ compactly
contained in $\Omega $ and by $B_{\varrho }$ a ball of radius $\varrho <R$
concentric with $B_{R}$.

\begin{lemma}[interpolation]
\label{interpolation} Let $v\in L_{\mathrm{loc}}^{\infty }\left( \Omega
\right) $ and let us assume that for some $\vartheta \geq 1$, $c>0$ and for
every $\varrho $\ and $R$\ such that $0<\rho <R$%
\begin{equation}
\left\Vert v\right\Vert _{L^{\infty }\left( B_{\varrho }\right) }^{\frac{1}{%
\vartheta }}\leq \frac{c}{\left( R-\varrho \right) ^{n}}\int_{B_{R}}\left%
\vert v\right\vert \,dx\,.  \label{assumption in the Lemma}
\end{equation}%
Then, for every $\lambda \in \left( \frac{\vartheta -1}{\vartheta },1\right) 
$ (i.e., in particular with $\vartheta \left( 1-\lambda \right) <1$) there
exists a constant $c_{\lambda }$ such that, for every $\varrho <R$, 
\begin{equation}
\left\Vert v\right\Vert _{L^{\infty }\left( B_{\varrho }\right) }^{\frac{%
1-\vartheta \left( 1-\lambda \right) }{\vartheta }}\leq \frac{c_{\lambda }}{%
\left( R-\varrho \right) ^{n}}\int_{B_{R}}\left\vert v\right\vert ^{\lambda
}\,dx.  \label{interpolation bound 2}
\end{equation}
\end{lemma}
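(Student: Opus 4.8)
The plan is to derive \eqref{interpolation bound 2} from \eqref{assumption in the Lemma} by interpolating the $L^1$ norm of $v$ between $L^\lambda$ and $L^\infty$, and then feeding the result back into the hypothesis. First I would fix $\varrho<R$ and introduce an intermediate radius, say $t\in(\varrho,R)$; I will apply \eqref{assumption in the Lemma} on the pair $(\varrho,t)$ and separately control $\int_{B_t}|v|\,dx$ in terms of $\int_{B_R}|v|^\lambda\,dx$ and $\|v\|_{L^\infty(B_t)}$. The elementary interpolation step is
\begin{equation*}
\int_{B_t}|v|\,dx=\int_{B_t}|v|^{1-\lambda}\,|v|^{\lambda}\,dx\le \|v\|_{L^\infty(B_t)}^{1-\lambda}\int_{B_t}|v|^{\lambda}\,dx\le \|v\|_{L^\infty(B_t)}^{1-\lambda}\int_{B_R}|v|^{\lambda}\,dx .
\end{equation*}
Combining this with \eqref{assumption in the Lemma} on $(\varrho,t)$ gives
\begin{equation*}
\|v\|_{L^\infty(B_\varrho)}^{1/\vartheta}\le \frac{c}{(t-\varrho)^n}\,\|v\|_{L^\infty(B_t)}^{1-\lambda}\int_{B_R}|v|^{\lambda}\,dx .
\end{equation*}

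The second step is to absorb the factor $\|v\|_{L^\infty(B_t)}^{1-\lambda}$. Here the natural device is a standard iteration/interpolation lemma for functions of the radius: set $\psi(t):=\|v\|_{L^\infty(B_t)}$, which is nondecreasing, and rewrite the last inequality as $\psi(\varrho)^{1/\vartheta}\le \frac{c\,A}{(t-\varrho)^n}\psi(t)^{1-\lambda}$ with $A:=\int_{B_R}|v|^\lambda\,dx$. Since $\vartheta(1-\lambda)<1$, raising to a suitable power and applying Young's inequality (or the classical lemma: if $\psi(\varrho)\le \tfrac12\psi(t)+\text{(stuff)}$ on all concentric balls then $\psi$ is controlled by the stuff) lets me convert the superlinear-looking bound into the desired one. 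Concretely, from $\psi(\varrho)\le c'\,(t-\varrho)^{-n\vartheta}A^{\vartheta}\,\psi(t)^{\vartheta(1-\lambda)}$ and Young's inequality with exponents $1/(\vartheta(1-\lambda))$ and its conjugate, for any $\varepsilon>0$,
\begin{equation*}
\psi(\varrho)\le \varepsilon\,\psi(t)+C_\varepsilon\,(t-\varrho)^{-\frac{n\vartheta}{1-\vartheta(1-\lambda)}}A^{\frac{\vartheta}{1-\vartheta(1-\lambda)}} .
\end{equation*}
Choosing $\varepsilon=\tfrac12$ and invoking the iteration lemma over the family of concentric balls between $B_\varrho$ and $B_R$ (with $t-\varrho$ comparable to $R-\varrho$ at each step) absorbs the $\varepsilon\psi(t)$ term and yields
\begin{equation*}
\|v\|_{L^\infty(B_\varrho)}\le \frac{c_\lambda}{(R-\varrho)^{\frac{n\vartheta}{1-\vartheta(1-\lambda)}}}\left(\int_{B_R}|v|^{\lambda}\,dx\right)^{\frac{\vartheta}{1-\vartheta(1-\lambda)}},
\end{equation*}
which is exactly \eqref{interpolation bound 2} after raising both sides to the power $\frac{1-\vartheta(1-\lambda)}{\vartheta}$ (positive, by the assumption $\lambda>\frac{\vartheta-1}{\vartheta}$).

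The main obstacle is the bookkeeping in the absorption step: one must make sure the exponents are positive — this is precisely where the hypothesis $\lambda\in\bigl(\tfrac{\vartheta-1}{\vartheta},1\bigr)$, equivalently $\vartheta(1-\lambda)<1$, is used — and one must set up the concentric-ball iteration so that the geometric loss $(t-\varrho)^{-n\vartheta/(1-\vartheta(1-\lambda))}$ sums to something comparable to $(R-\varrho)^{-n\vartheta/(1-\vartheta(1-\lambda))}$. The finiteness of $\psi(t)=\|v\|_{L^\infty(B_t)}$ for $t<R$, guaranteed by $v\in L^\infty_{\mathrm{loc}}(\Omega)$, is what makes the absorption legitimate; without it the Young-inequality trick would be formal. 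Everything else is routine.
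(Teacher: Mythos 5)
Your proposal is correct, but it follows a genuinely different route from the paper's proof. The paper never applies Young's inequality or the classical absorption lemma. Instead, it keeps working with the $L^{1}$ quantities: after the interpolation step
\begin{equation*}
\int_{B_{\varrho }}\left\vert v\right\vert \,dx\leq \left( \frac{c}{\left( R-\varrho \right) ^{n}}\int_{B_{R}}\left\vert v\right\vert \,dx\right) ^{\vartheta (1-\lambda)}\int_{B_{\varrho }}\left\vert v\right\vert ^{\lambda }\,dx\,,
\end{equation*}
it sets $B_{k}=\int_{B_{\varrho _{k}}}\left\vert v\right\vert \,dx$ on a dyadic family of radii $\varrho_k = R_0 - (R_0-\varrho_0)/2^k$, iterates the recursive bound $B_{k}\leq C\,2^{n\gamma (k+1)}B_{k+1}^{\gamma}$ with $\gamma=\vartheta(1-\lambda)<1$, and passes to the limit $k\to\infty$ to get $B_0 \leq c_1\bigl((R_0-\varrho_0)^{-n\gamma}\int_{B_{R_0}}|v|^\lambda\,dx\bigr)^{1/(1-\gamma)}$; only then does it feed this back into the original hypothesis on a pair of radii $(\varrho,\bar\varrho)$ with $\bar\varrho=(R+\varrho)/2$. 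You instead rewrite immediately in terms of $\psi(t)=\left\Vert v\right\Vert _{L^{\infty }(B_{t})}$, raise the hypothesis to the power $\vartheta$, apply Young's inequality to split $\psi(t)^{\vartheta(1-\lambda)}$ into $\varepsilon\psi(t)+C_\varepsilon$, and invoke the standard absorption lemma (e.g., Giusti, \emph{Direct Methods}, Lemma 6.1) to remove the $\varepsilon\psi(t)$ term. Both arguments hinge on $\vartheta(1-\lambda)<1$ and on local finiteness of $\psi$, and both arrive at the same exponent $\frac{\vartheta}{1-\vartheta(1-\lambda)}$. Yours is shorter because it outsources the geometric iteration to a standard lemma; the paper's is self-contained and iterates the $L^1$-masses $B_k$ rather than $\psi$ itself, which avoids any explicit use of Young's inequality but requires carrying out the dyadic iteration by hand. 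Either way the bookkeeping you flag as the main obstacle is indeed the only nontrivial point, and your exponent calculation matches \eqref{interpolation bound 2} once you raise the final display to the power $\frac{1-\vartheta(1-\lambda)}{\vartheta}$.
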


\begin{proof}
Fixed $\lambda \in \left( \frac{\vartheta -1}{\vartheta },1\right) $, we
make use of the interpolation inequality 
\begin{equation*}
\int_{B_{\varrho }}\left\vert v\right\vert \,dx=\int_{B_{\varrho
}}\left\vert v\right\vert ^{1-\lambda }\left\vert v\right\vert ^{\lambda
}\,dx\leq \left\Vert v\right\Vert _{L^{\infty }\left( B_{\varrho }\right)
}^{1-\lambda }\int_{B_{\varrho }}\left\vert v\right\vert ^{\lambda }\,dx\,.
\end{equation*}%
By the assumption (\ref{assumption in the Lemma}) we obtain 
\begin{equation*}
\int_{B_{\varrho }}\left\vert v\right\vert \,dx \leq \left( \frac{c}{\left(
R-\varrho \right) ^{n}}\int_{B_{R}}\left\vert v\right\vert \,dx\right)
^{\vartheta \left( 1-\lambda \right) }\int_{B_{\varrho }}\left\vert
v\right\vert ^{\lambda }\,dx\,.
\end{equation*}%
We denote by $\gamma :=\vartheta \left( 1-\lambda \right) $ and we observe
that $0<\gamma <1$ since $\lambda >\frac{\vartheta -1}{\vartheta }$. Thus
the previous estimate has the equivalent form 
\begin{equation}
\int_{B_{\varrho }}\left\vert v\right\vert \,dx\leq c^{\gamma
}\int_{B_{\varrho }}\left\vert v\right\vert ^{\lambda }\,dx\cdot \left( 
\frac{1}{\left( R-\varrho \right) ^{n}}\int_{B_{R}}\left\vert v\right\vert
\,dx\right) ^{\gamma }.  \label{interpolation 1}
\end{equation}%
Given $\varrho _{0}$ and $R_{0}$, with $0<\varrho _{0}<R_{0}\leq \varrho
_{0}+1$, we define a decreasing sequence $\varrho _{k}$ by $\varrho
_{k}=R_{0}-\frac{R_{0}-\varrho _{0}}{2^{k}}\,$, $k=0,1,2,\ldots $ In (\ref%
{interpolation 1}) we pose $\varrho =\varrho _{k}$ and $R=\varrho _{k+1}$.
Since $R-\varrho =\varrho _{k+1}-\varrho _{k}=\frac{R_{0}-\varrho _{0}}{%
2^{k+1}}$, we obtain 
\begin{equation*}
\int_{B_{\varrho _{k}}}\left\vert v\right\vert \,dx\leq c^{\gamma
}\int_{B_{R_{0}}}\left\vert v\right\vert ^{\lambda }\,dx\cdot \left( \frac{%
2^{n\left( k+1\right) }}{\left( R_{0}-\varrho _{0}\right) ^{n}}%
\int_{B_{\varrho _{k+1}}}\left\vert v\right\vert \,dx\right) ^{\gamma },
\end{equation*}%
Denote $B_{k}=\int_{B_{\varrho _{k}}}\left\vert v\right\vert \,dx$ for $%
k=0,1,2,\ldots $. The last inequality becomes 
\begin{equation*}
B_{k}\leq c^{\gamma }\int_{B_{R_{0}}}\left\vert v\right\vert ^{\lambda
}\,dx\cdot \frac{2^{n\gamma \left( k+1\right) }}{\left( R_{0}-\varrho
_{0}\right) ^{n\gamma }}B_{k+1}^{\gamma }\;.
\end{equation*}%
We start to iterate with $k=0,1,2,\ldots $%
\begin{equation*}
B_{0}\leq c^{\gamma }\int_{B_{R_{0}}}\left\vert v\right\vert ^{\lambda
}\,dx\cdot \frac{2^{n\gamma }}{\left( R_{0}-\varrho _{0}\right) ^{n\gamma }}%
B_{1}^{\gamma }\;
\end{equation*}%
\begin{equation*}
\leq c^{\gamma }\int_{B_{R_{0}}}\left\vert v\right\vert ^{\lambda }\,dx\cdot 
\frac{2^{n\gamma }}{\left( R_{0}-\varrho _{0}\right) ^{n\gamma }}\left(
c^{\gamma }\int_{B_{R_{0}}}\left\vert v\right\vert ^{\lambda }\,dx\cdot 
\frac{2^{n\gamma \cdot 2}}{\left( R_{0}-\varrho _{0}\right) ^{n\gamma }}%
B_{2}^{\gamma }\right) ^{\gamma }\;
\end{equation*}%
and for general $k=1,2,3,\ldots $ we have 
\begin{equation*}
B_{0}\leq \left( \frac{c^{\gamma }\int_{B_{R_{0}}}\left\vert v\right\vert
^{\lambda }\,dx}{\left( R_{0}-\varrho _{0}\right) ^{n\gamma }}\right)
^{\sum_{i=0}^{k-1}\gamma ^{i}}\left( 2^{n}\right) ^{\sum_{i=1}^{k}i\gamma
^{i}}\left( B_{k}\right) ^{\gamma ^{k}}\;.
\end{equation*}%
Since $0<\gamma <1$, passing to the limit as $k\to\infty$, $%
\sum_{i=0}^{\infty}i\gamma ^{i}<\infty$ and $\sum_{i=0}^{\infty}\gamma ^{i}=%
\frac{1}{1-\gamma }$. Moreover the increasing sequence $B_{k}=\int_{B_{%
\varrho _{k}}}\left\vert v\right\vert \,dx$ is bounded by $%
\int_{B_{R_{0}}}\left\vert v\right\vert \,dx$ for $k=0,1,2,\ldots $ Thus $%
\left( B_{k}\right) ^{\gamma ^{k}}=\left( \int_{B_{\varrho _{k}}}\left\vert
v\right\vert \,dx\right) ^{\gamma ^{k}}\leq \left(
\int_{B_{R_{0}}}\left\vert v\right\vert \,dx\right) ^{\gamma ^{k}}$ and the
right hand side converges to $1$ as $k\rightarrow \infty $. Therefore, in
the limit as $k\rightarrow \infty $, there exists a constant $c_{1}$ such
that 
\begin{equation}
B_{0}=\int_{B_{\varrho _{0}}}\left\vert v\right\vert \,dx\leq c_{1}\left( 
\frac{1}{\left( R_{0}-\varrho _{0}\right) ^{n\gamma }}\int_{B_{R_{0}}}\left%
\vert v\right\vert ^{\lambda }\,dx\right) ^{\frac{1}{1-\gamma }}\,.
\label{interpolation 11}
\end{equation}%
Fixed $\varrho <R$ we consider $\overline{\varrho }=\frac{R+\varrho }{2}$
and, by combining the assumption (\ref{assumption in the Lemma}) and (\ref%
{interpolation 11}), since $R-\overline{\varrho }=\overline{\varrho }%
-\varrho $ and $\gamma =\vartheta \left( 1-\lambda \right) $, 
\begin{equation*}
\left\Vert v\right\Vert _{L^{\infty }\left( B_{\varrho }\right) }\leq \left( 
\frac{c}{\left( \overline{\varrho }-\varrho \right) ^{n}}\int_{B_{\overline{%
\varrho }}}\left\vert v\right\vert \,dx\right) ^{\vartheta }
\end{equation*}%
\begin{equation*}
\leq \left( \frac{c\cdot c_{1}}{\left( \overline{\varrho }-\varrho \right)
^{n}}\left( \frac{1}{\left( R-\overline{\varrho }\right) ^{n\gamma }}%
\int_{B_{R}}\left\vert v\right\vert ^{\lambda }\,dx\right) ^{\frac{1}{%
1-\gamma }}\right) ^{\vartheta }
\end{equation*}%
\begin{equation*}
\leq c_{2}\left( \frac{1}{\left( R-\overline{\varrho }\right) ^{n\left(
1-\gamma \right) +n\gamma }}\int_{B_{R}}\left\vert v\right\vert ^{\lambda
}\,dx\right) ^{\frac{\vartheta }{1-\gamma }}=c_{3}\left( \frac{1}{\left(
R-\varrho \right) ^{n}}\int_{B_{R}}\left\vert v\right\vert ^{\lambda
}\,dx\right) ^{\frac{\vartheta }{1-\vartheta \left( 1-\lambda \right) }},
\end{equation*}%
which gives (\ref{interpolation bound 2}) and the proof of the Lemma is
concluded.
\end{proof}

\medskip 

\bigskip

\section{A priori estimates}

\label{A priori estimates}

In order to simplify the notations, without loss of generality in this
section we assume that $t_{0}=1$. First of all we give a technical result.

\begin{lemma}
\label{g1eg2} Let us assume that \eqref{H}$_2$ and \eqref{H}$_3$ hold. Then
for every $\gamma\geq 0$ there exists a constant $C_3= C_3(C_1\,,g_2(1))>0$
independent of $\gamma$, such that 
\begin{equation}  \label{eq:int1}
C_3\left[ 1+ g_2(1 + t)^{\frac{1}{2^*}} \frac{(1 + t)^{\frac{\gamma}{2}%
+1-\beta}}{ \left (\frac{\gamma}{2} + 1 - \beta \right )^2} \right] \leq
1+\int_0^t (1+s)^{\frac{\gamma - 2}{2}} s \sqrt{g_1(1+s)}\,ds
\end{equation}
for every $t\geq 0$, where , for $n>2$, $2^{\ast }=\frac{2n}{n-2}$, while,
for $n=2$, $2^{\ast }$ can be any number greater than $\frac 2{1-\beta}$.
\end{lemma}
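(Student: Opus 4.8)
The plan is to prove the inequality \eqref{eq:int1} by estimating from below the integral on the right-hand side, exploiting the structural hypotheses \eqref{H}$_2$ and \eqref{H}$_3$ to produce exactly the term involving $g_2(1+t)^{1/2^*}$. The key observation is that \eqref{H}$_3$, namely $\left(g_2(t)\right)^{(n-2)/n}\leq C_1 t^{2\beta}g_1(t)$, rewrites as $\sqrt{g_1(1+s)}\geq C_1^{-1/2}(1+s)^{-\beta}\left(g_2(1+s)\right)^{1/2^*}$ (using $\frac{n-2}{n}=\frac{2}{2^*}$, and the $n=2$ convention that replaces $\frac{n-2}{n}$ by any positive number less than $1-\beta$, consistent with $2^*$ being chosen large). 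Plugging this into the integrand gives
\begin{equation*}
\int_0^t (1+s)^{\frac{\gamma-2}{2}} s\sqrt{g_1(1+s)}\,ds \geq \frac{1}{\sqrt{C_1}}\int_0^t (1+s)^{\frac{\gamma-2}{2}-\beta}\,s\left(g_2(1+s)\right)^{\frac{1}{2^*}}\,ds.
\end{equation*}

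Next I would use the monotonicity from \eqref{H}$_2$ to pull the $g_2$ factor out at the endpoint. Since $t\mapsto t^\mu g_2(t)$ is decreasing, $\left(g_2(1+s)\right)^{1/2^*}\geq (1+s)^{\mu/2^*}(1+t)^{-\mu/2^*}\left(g_2(1+t)\right)^{1/2^*}$ for $s\leq t$; more crudely, since $g_2$ is itself comparable to $t^{-\mu}$ times something decreasing, $g_2(1+s)\geq g_2(1+t)$ at least up to a power of $(1+s)/(1+t)$ which is $\geq $ a constant only on a suitable sub-interval. Cleaner is to restrict the integral to $s\in[t/2,t]$ (for $t\geq 1$, say), where $(1+s)$ and $(1+t)$ are comparable up to absolute constants, so $\left(g_2(1+s)\right)^{1/2^*}\geq c\left(g_2(1+t)\right)^{1/2^*}$ there by the decreasing/increasing interplay in \eqref{H}$_2$ (write $g_2(1+s)=(1+s)^{-\mu}\cdot(1+s)^\mu g_2(1+s)\geq (1+s)^{-\mu}(1+t)^\mu g_2(1+t)$ and absorb $((1+t)/(1+s))^\mu\leq 2^\mu$). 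Then
\begin{equation*}
\int_{t/2}^t (1+s)^{\frac{\gamma-2}{2}-\beta}\,s\,ds \geq c\,(1+t)^{\frac{\gamma-2}{2}-\beta}\cdot t\cdot \frac{t}{2} \geq c'\,(1+t)^{\frac{\gamma}{2}-\beta},
\end{equation*}
which already produces the power $(1+t)^{\gamma/2+1-\beta}$ up to adjusting: indeed $(1+t)^{(\gamma-2)/2-\beta}\cdot t^2 \sim (1+t)^{\gamma/2+1-\beta}$ for $t\geq 1$.

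The remaining point is the $\left(\frac{\gamma}{2}+1-\beta\right)^{-2}$ normalization and the uniformity in $\gamma$. Rather than the crude half-interval trick, one computes $\int_0^t (1+s)^{a-1}s\,ds$ with $a=\frac{\gamma}{2}+1-\beta$; integrating by parts or splitting $s=(1+s)-1$ gives $\int_0^t(1+s)^{a}\,ds-\int_0^t(1+s)^{a-1}\,ds$, and when $a>0$ this is $\frac{(1+t)^{a+1}-1}{a+1}-\frac{(1+t)^a-1}{a}$, which for $t\geq 1$ is bounded below by a constant times $\frac{(1+t)^{a+1}}{a(a+1)}\geq c\frac{(1+t)^{a+1}}{a^2}$. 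Since $a+1=\frac{\gamma}{2}+2-\beta$ and we want $(1+t)^{\gamma/2+1-\beta}=(1+t)^{a}$, one loses one power of $(1+t)$ relative to the naive count — but the factor $g_2(1+t)^{1/2^*}$ together with the $(1+s)^{-\beta}$ already eaten, plus the option to integrate $(1+s)^{(\gamma-2)/2-\beta}\cdot s$ which is $\int(1+s)^{a-1}-\int(1+s)^{a-2}$ yielding leading term $(1+t)^a/a$, restores the bookkeeping: the exponent $\frac{\gamma}{2}+1-\beta$ in the claim is exactly $a$, and the denominator $a^2$ comes from the two integrations (one from $\int(1+s)^{a-1}ds\sim (1+t)^a/a$ and a second factor $1/a$ needed to absorb the subtracted lower-order term uniformly in $a$ near $0$). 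Finally one adds the ``$1+$'' on both sides and chooses $C_3=C_3(C_1,g_2(1))$ small enough to dominate the constants accumulated; note $g_2$ enters only through $g_2(1)$ because $g_2(1+t)$ on the right is genuine, not bounded, and on the left-hand lower bound it appears honestly via \eqref{H}$_3$.

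The main obstacle I expect is the \emph{uniformity in $\gamma$ of the constant} $C_3$, i.e. controlling $\int_0^t(1+s)^{a-1}s\,ds$ from below by $c\,(1+t)^a/a^2$ with $c$ independent of $a>0$, \emph{especially as $a\downarrow 0$} (which corresponds to $\gamma$ near $2\beta-2$, but since $\gamma\geq 0$ and $\beta<2/n\leq 1$ we have $a=\frac{\gamma}{2}+1-\beta>0$ always, though it can be small when $\gamma$ is small and $\beta$ close to $1$). Near $a=0$ the antiderivative $\frac{(1+t)^a-1}{a}\to\log(1+t)$, so the claimed $(1+t)^a/a^2$ bound must be checked to degrade gracefully; the elementary inequality $(1+t)^a-1\geq a\log(1+t)$ and, for $t\geq1$, $(1+t)^a-1\geq c\,a$ handle this, and the extra $1/a$ is harmless since it only makes the lower bound smaller. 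Everything else — the algebra of exponents, the case $n=2$ via the stated convention on $2^*$, and reinstating $t_0=1$ — is routine.
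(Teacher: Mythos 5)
Your proposal follows the same outline as the paper's proof: (i) use \eqref{H}$_3$ to replace $\sqrt{g_1(1+s)}$ by $C_1^{-1/2}(1+s)^{-\beta}g_2(1+s)^{1/2^*}$, (ii) pull $g_2(1+t)^{1/2^*}$ out of the integral by monotonicity, (iii) bound the remaining elementary integral $\int_0^t(1+s)^{a-2}s\,ds$ with $a=\tfrac{\gamma}{2}+1-\beta$, and (iv) absorb the stray term $g_2(1+t)^{1/2^*}$ using $g_2(1+t)\le g_2(1)$. The paper carries out (iii) by invoking a ready-made inequality (Lemma~2.2 of \cite{EMM3}), $(1+t)^{a}\leq c\,a^2\bigl(1+\int_0^t(1+s)^{a-2}s\,ds\bigr)$, which packages the ``$+1$'' and the $a^2$ uniformly in $a\ge\alpha_0>0$; you instead attempt a direct computation, which is fine in principle.

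Two points where you overcomplicate or go astray. First, for step (ii) you argue through $t^\mu g_2(t)$ and propose restricting the integral to $[t/2,t]$ with a $2^\mu$ fudge factor; none of that is needed, because \eqref{H}$_2$ with any $\mu\ge 0$ already implies $g_2$ itself is decreasing ($t_1<t_2 \Rightarrow g_2(t_2)\le(t_1/t_2)^\mu g_2(t_1)\le g_2(t_1)$), which is exactly what the paper uses, on the whole interval $[0,t]$. Your manipulation $g_2(1+s)=(1+s)^{-\mu}(1+s)^\mu g_2(1+s)\ge(1+s)^{-\mu}(1+t)^\mu g_2(1+t)$ in fact gives $g_2(1+s)\ge g_2(1+t)$ outright, since $((1+t)/(1+s))^\mu\ge 1$ — so the ``absorbing $2^\mu$'' worry points in the wrong direction. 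Second, your concern about uniformity of $C_3$ as $a\downarrow 0$ is unfounded: since $\gamma\ge 0$ and $\beta<\tfrac{2}{n}$ we have $a=\tfrac{\gamma}{2}+1-\beta\ge 1-\beta>1-\tfrac{2}{n}=\tfrac{2}{2^*}$ (and the $n=2$ convention on $2^*$ preserves $a>\tfrac{2}{2^*}$), so $a$ is bounded away from $0$ by a constant independent of $\gamma$. This renders moot the ``extra $1/a$'' handwaving in your step (iii). Related to this, you initially set up $\int_0^t(1+s)^{a-1}s\,ds$ where the exponent should be $a-2$; you notice and patch this, but it is worth stating cleanly at the outset: after step (i) the integrand is $(1+s)^{a-2}s$, whose antiderivative yields the leading term $(1+t)^a/a$, which since $a\ge 2/2^*$ dominates $(1+t)^a/a^2$ up to a constant depending only on $n$. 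Once these slips are tidied, the argument closes exactly as in the paper.
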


\begin{proof}
If $t\geq 0$ then, by assumption \eqref{H}$_{3}$ 
\begin{equation*}
1+\int_{0}^{t}(1+s)^{\frac{\gamma -2}{2}}s\sqrt{g_{1}(1+s)}\,ds\geq
\,1+\int_{0}^{t}(1+s)^{\frac{\gamma -2}{2}}s\,\frac{1}{\sqrt{C_{1}}}%
(1+s)^{-\beta }g_{2}(1+s)^{\frac{1}{2^{\ast }}}\,ds.
\end{equation*}%
On the other hand, since $g_{2}$ is decreasing, $g_{2}(1+s)^{\frac{1}{%
2^{\ast }}}\geq \,g_{2}(1+t)^{\frac{1}{2^{\ast }}}$ and therefore 
\begin{equation*}
1+\frac{1}{\sqrt{C_{1}}}g_{2}(1+t)^{\frac{1}{2^{\ast }}}\int_{0}^{t}(1+s)^{%
\frac{\gamma -2}{2}-\beta }s\,ds\leq 1+\int_{0}^{t}(1+s)^{\frac{\gamma -2}{2}%
}s\,\frac{1}{\sqrt{C_{1}}}(1+s)^{-\beta }g_{2}(1+s)^{\frac{1}{2^{\ast }}%
}\,ds.
\end{equation*}%
By Lemma 2.2 in \cite{EMM3}, we have that (see (2.6) here): let $\alpha
_{0}>0$ there exists a constant $c$ depending on $\alpha _{0}$, but
independent of $\alpha \geq \alpha _{0}$, such that 
\begin{equation}
(1+t)^{\alpha }\leq c\,\alpha ^{2}\left( 1+\int_{0}^{t}(1+s)^{\alpha
-2}s\,ds\right) .  \label{lemmapaolo}
\end{equation}%
In our case $\alpha :=\frac{\gamma -2}{2}-\beta +2=\frac{\gamma }{2}+1-\beta 
$ and $\alpha \geq \alpha _{0}:=\frac{2}{2^{\ast }}$. Inequality %
\eqref{lemmapaolo} is valid for all $t\geq 0$ so in particular for $t\geq 1$
and it entails 
\begin{equation*}
\int_{0}^{t}(1+s)^{\alpha -2}s\,ds\geq \,\frac{(1+t)^{\alpha }}{c\,\alpha
^{2}}-1.
\end{equation*}%
The last inequality implies 
\begin{eqnarray*}
&&1+\frac{1}{\sqrt{C_{1}}}g_{2}(1+t)^{\frac{1}{2^{\ast }}}\int_{0}^{t}(1+s)^{%
\frac{\gamma -2}{2}-\beta }s\,ds \\
&\geq &\,1+\frac{1}{\sqrt{C_{1}}}g_{2}(1+t)^{\frac{1}{2^{\ast }}}\left[ 
\frac{(1+t)^{\frac{\gamma }{2}+1-\beta }}{c\left( \frac{\gamma }{2}+1-\beta
\right) ^{2}}-1\right] \\
&=&1+\frac{1}{\sqrt{C_{1}}}g_{2}(1+t)^{\frac{1}{2^{\ast }}}\frac{(1+t)^{%
\frac{\gamma }{2}+1-\beta }}{c\left( \frac{\gamma }{2}+1-\beta \right) ^{2}}-%
\frac{1}{\sqrt{C_{1}}}g_{2}(1+t)^{\frac{1}{2^{\ast }}}.
\end{eqnarray*}%
Now we observe that, for every $t\geq 0$, since $g_{2}$ is decreasing, 
\begin{equation*}
\frac{1}{\sqrt{C_{1}}}g_{2}(1+t)^{\frac{1}{2^{\ast }}}\leq \,\frac{1}{\sqrt{%
C_{1}}}g_{2}(1)^{\frac{1}{2^{\ast }}}=:\tilde{C}_{1}.
\end{equation*}%
Thus summing up 
\begin{equation*}
1+\tilde{C}_{1}+\int_{0}^{t}(1+s)^{\frac{\gamma -2}{2}}s\sqrt{g_{1}(1+s)}%
\,ds\geq \,1+\frac{1}{\sqrt{C_{1}}}g_{2}(1+t)^{\frac{1}{2^{\ast }}}\frac{%
(1+t)^{\frac{\gamma }{2}+1-\beta }}{c\left( \frac{\gamma }{2}+1-\beta
\right) ^{2}}
\end{equation*}%
which in turn implies 
\begin{eqnarray*}
&&(1+\tilde{C}_{1})\left[ 1+\int_{0}^{t}(1+s)^{\frac{\gamma -2}{2}}s\sqrt{%
g_{1}(1+s)}\,ds\right] \\
&\geq &\,1+\tilde{C}_{1}+\int_{0}^{t}(1+s)^{\frac{\gamma -2}{2}}s\sqrt{%
g_{1}(1+s)}\,ds \\
&\geq &\,1+\frac{1}{\sqrt{C_{1}}}g_{2}(1+t)^{\frac{1}{2^{\ast }}}\frac{%
(1+t)^{\frac{\gamma }{2}+1-\beta }}{c\left( \frac{\gamma }{2}+1-\beta
\right) ^{2}}
\end{eqnarray*}%
therefore, by setting $\tilde{C}_{2}:=(1+\tilde{C}_{1})\sqrt{C_{1}}\,c$, we
get \eqref{eq:int1} for $C_{3}=\frac{1}{\tilde{C}_{2}}$. We note explicitly
that $\tilde{C}_{2}$ may depend on $n$ but it is independent of $\gamma $.
\end{proof}

\begin{lemma}
\label{step 1} Assume that $f$ satisfies the growth assumptions \eqref{H}$%
_{1}$, \eqref{H}$_{2}$, \eqref{H}$_{3}$. In addiction, assume that $f\left(
\xi \right) $ is of class $\mathcal{C}^{2}(\mathbb{R}^{n})$ and for every $%
M>0$ there exists a positive constant $\ell =\ell (M)$ such that 
\begin{equation}
\ell \,|\lambda |^{2}\leq \,\sum_{i,j=1}^{n}f_{\xi _{i}\xi _{j}}(\xi
)\,\lambda _{i}\,\lambda _{j}\qquad \forall \lambda ,\xi \in \mathbb{R}%
^{n},\,|\xi |\geq M.  \label{supp1}
\end{equation}%
If $u\in W_{\mathrm{loc}}^{1,\infty }(\Omega )$ is a a local minimizer of %
\eqref{energy integral}, then for every $0<\rho <R$, $\bar{B}_{R}\subset
\Omega $ there exists a positive constants $c_{4}$ depending only on $C_{1}$%
, $\beta $, $g_{2}(1)$, such that 
\begin{equation}
\begin{split}
& \left( \Vert 1+(|Du|-1)_{+}\Vert _{L^{\infty }(B_{\rho })}\right)
^{2-n\beta } \\
& \qquad \qquad \qquad \leq \,\frac{c_{4}}{({R}-{\rho })^{n}}%
\int_{B_{R}}(1+(|Du|-1)_{+})^{2}g_{2}(1+(|Du|-1)_{+})\,dx.
\end{split}
\label{eq:normainfinto}
\end{equation}
\end{lemma}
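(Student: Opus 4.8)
The plan is to set $V:=1+(|Du|-1)_{+}=\max\{|Du|,1\}\ge 1$ and run a Moser-type iteration on the quantities $\int_{B}g_{2}(V)V^{\delta}\,dx$, feeding the Sobolev step with Lemma~\ref{g1eg2}. Since $f\in\mathcal{C}^{2}(\mathbb{R}^{n})$ and \eqref{supp1} holds, classical difference-quotient/elliptic-regularity arguments (recall $u\in W^{1,\infty}_{\loc}(\Omega)$ by hypothesis) give $u\in W^{2,2}_{\loc}(\Omega)$, so the Euler--Lagrange equation of \eqref{energy integral} may be differentiated: for every $s$,
\[
\int_{\Omega}\sum_{i,j=1}^{n}f_{\xi_{i}\xi_{j}}(Du)\,u_{x_{j}x_{s}}\,\varphi_{x_{i}}\,dx=0\qquad\text{for all }\varphi\in W^{1,2}(\Omega)\text{ with compact support.}
\]

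The first step is a Caccioppoli inequality at a ``level'' $\gamma\ge 0$. Fix a ball $B_{R}\subset\Omega$, a cut-off $\eta\in C_{c}^{\infty}(B_{R})$, and write $w=(|Du|-1)_{+}$. Choosing $\varphi=\eta^{2}u_{x_{s}}(1+w)^{\gamma-2}w^{2}$ — admissible because the profile $(1+w)^{\gamma-2}w^{2}$ is Lipschitz, nonnegative, and vanishes on $\{|Du|\le 1\}$, so the argument stays inside the region where \eqref{H}$_{1}$ is available — summing over $s$ and integrating by parts, one bounds the diagonal quadratic form below by $g_{1}$, discards the (nonnegative) term coming from the $w$-derivative of the profile, estimates the term containing $D\eta$ by the Cauchy--Schwarz inequality for the positive semidefinite matrix $(f_{\xi_{i}\xi_{j}}(Du))$ together with the upper bound $g_{2}$, and absorbs via Young's inequality. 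Using $|D|Du||\le|D^{2}u|$ and $|Du|=1+w$, $Dw=D|Du|$ on $\{|Du|>1\}$, this yields
\[
\int_{B_{R}}\eta^{2}\,(1+w)^{\gamma-2}w^{2}\,g_{1}(1+w)\,|Dw|^{2}\,dx\;\le\;4\int_{B_{R}}|D\eta|^{2}\,(1+w)^{\gamma+2}\,g_{2}(1+w)\,dx.
\]

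Now observe that the left-hand side equals $\int_{B_{R}}\eta^{2}|D\psi_{\gamma}(w)|^{2}\,dx$, where $\psi_{\gamma}(t):=\int_{0}^{t}(1+s)^{\frac{\gamma-2}{2}}s\sqrt{g_{1}(1+s)}\,ds$ is exactly the function of Lemma~\ref{g1eg2} (indeed $\psi_{\gamma}'(t)^{2}=(1+t)^{\gamma-2}t^{2}g_{1}(1+t)$; the factor $s^{2}$ inside $\psi_{\gamma}$, inherited from $w^{2}$ in the test function, is what makes the estimates below clean, even at $\gamma=0$). Applying the Sobolev inequality to $\eta\,\psi_{\gamma}(w)$, bounding $\int|D(\eta\psi_{\gamma}(w))|^{2}$ by the Caccioppoli estimate together with the elementary inequality $\psi_{\gamma}(w)^{2}\le\frac{4}{(\gamma+1)^{2}}(1+w)^{\gamma+2}g_{2}(1+w)$ (which follows from $g_{1}\le g_{2}$ and from $t\mapsto t g_{2}(t)$ increasing, \eqref{H}$_{2}$), and finally bounding $\psi_{\gamma}(w)$ from below by Lemma~\ref{g1eg2}, one obtains a reverse-H\"older--type inequality: for all $\rho'<R'$,
\[
\Big(\int_{B_{\rho'}}g_{2}(V)\,V^{(\frac{\gamma}{2}+1-\beta)2^{*}}\,dx\Big)^{2/2^{*}}\;\le\;\frac{C(\gamma)}{(R'-\rho')^{2}}\int_{B_{R'}}g_{2}(V)\,V^{\gamma+2}\,dx ,
\]
with $2^{*}$ as in Lemma~\ref{g1eg2} (and the stated convention when $n=2$), and with $C(\gamma)$ growing at most polynomially in $\gamma$ — the only source of growth being the factor $(\tfrac{\gamma}{2}+1-\beta)^{4}$ from inverting the bound of Lemma~\ref{g1eg2}, since its constant $C_{3}$ is \emph{independent of} $\gamma$.

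It remains to iterate. Put $\delta_{0}=2$ (so the right-hand side at the first step is exactly $\int_{B_{R}}g_{2}(V)V^{2}\,dx$, the right-hand side of \eqref{eq:normainfinto}) and, having applied the above with $\gamma_{k}=\delta_{k}-2$, set $\delta_{k+1}=(\tfrac{\delta_{k}}{2}-\beta)2^{*}$. The hypothesis $\tfrac1n<\beta<\tfrac2n$ in \eqref{H}$_{3}$ enters twice: it ensures $\delta_{0}=2>n\beta$, whence $(\delta_{k})$ is strictly increasing with $\delta_{k}\to+\infty$, and it ensures that $\tfrac{\gamma_{k}}{2}+1-\beta\ge 2/2^{*}$ throughout, as Lemma~\ref{g1eg2} requires. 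Telescoping the reverse-H\"older inequalities along radii $\rho_{k}\downarrow\rho$ with $\rho_{k}-\rho_{k+1}\simeq(R-\rho)2^{-k}$, using $\sum_{k}(2^{*}/2)^{-k}\log C(\gamma_{k})<\infty$ and $\delta_{k}(2^{*}/2)^{-k}\to 2-n\beta$, and passing to the limit $k\to\infty$ (legitimate since $V\in L^{\infty}_{\loc}$ by hypothesis), one arrives at $\big(\|V\|_{L^{\infty}(B_{\rho})}\big)^{2-n\beta}\le\frac{c_{4}}{(R-\rho)^{n}}\int_{B_{R}}V^{2}g_{2}(V)\,dx$, that is \eqref{eq:normainfinto}, with $c_{4}$ depending only on $n$, $C_{1}$, $\beta$ and $g_{2}(1)$. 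The delicate point is the book-keeping of the $\gamma$-dependence: the test-function profile must be tuned so that the Caccioppoli left-hand side reconstructs precisely $|D\psi_{\gamma}(w)|^{2}$ for the $\psi_{\gamma}$ of Lemma~\ref{g1eg2}, and one must then check that the constants $C(\gamma_{k})$ grow only polynomially — which is exactly what the $\gamma$-independence of $C_{3}$ provides — so that the infinite product of constants in the iteration converges.
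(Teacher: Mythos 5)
Your proposal follows essentially the same route as the paper's proof: the same test function $\varphi=\eta^{2}u_{x_{s}}\Phi((|Du|-1)_{+})$ with $\Phi(t)=(1+t)^{\gamma-2}t^{2}$, the same auxiliary function (the paper works with $G=1+\psi_{\gamma}$), the same use of Lemma~\ref{g1eg2} to feed the Sobolev step, and the same Moser iteration with $\delta_{0}=2$ and $\delta_{k+1}=(\delta_{k}-2\beta)\,2^{*}/2$. The only slip is that Lemma~\ref{g1eg2} gives a lower bound for $1+\psi_{\gamma}(t)$ rather than for $\psi_{\gamma}(t)$ alone, so the Sobolev inequality should be applied to $\eta\,(1+\psi_{\gamma}(w))$ as in the paper, and the resulting additive terms absorbed at the end using $V\geq 1$ together with the monotonicity of $t\mapsto tg_{2}(t)$, exactly as in \eqref{+1}.
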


\begin{proof}
Since the local minimizer $u$ is in $W_{\mathrm{loc}}^{1,\infty }(\Omega )$,
it satisfies the Euler equation: for every open set $\Omega ^{\prime }$
compactly contained in $\Omega $ we have 
\begin{equation*}
\int_{\Omega }\sum_{i=1}^{n}f_{\xi _{i}}(Du)\,\varphi _{x_{i}}\,dx=0\qquad
\forall \varphi \in W_{0}^{1,2}(\Omega ^{\prime }).
\end{equation*}%
Moreover, by the techniques of the difference quotient (see for example \cite%
[Ch.~8, Sect.~8.1]{giusti}), $u\in W_{\mathrm{loc}}^{2,2}(\Omega )$, then
the second variation holds: 
\begin{equation*}
\int_{\Omega }\sum_{i,j=1}^{n}f_{\xi _{i}\xi _{j}}(Du)u_{x_{j}x_{k}}\varphi
_{x_{i}}\,dx=0,\qquad \forall k=1,\dots ,n,\,\,\,\forall \varphi \in
W_{0}^{1,2}(\Omega ^{\prime }).
\end{equation*}%
For fixed $k=1,\dots ,n$ let $\eta \in \mathcal{C}_{0}^{1}(\Omega ^{\prime
}) $ be equal to 1 in $B_{\rho }$, with support contained in $B_{R}$, such
that $|D\eta |\leq \,\frac{2}{(R-\rho )},$ and consider $\varphi =\eta
^{2}\,u_{x_{k}}\,\Phi ((|Du|-1)_{+})$ with $\Phi $ non negative, increasing,
locally Lipschitz continuous on $[0,+\infty )$, such that $\Phi (0)=0$. Here 
$(a)_{+}$ denotes the positive part of $a\in \mathbb{R}$; in the following
we denote $\Phi ((|Du|-1)_{+})=\Phi (|Du|-1)_{+}.$ Then a.e. in $\Omega $ 
\begin{equation*}
\varphi _{x_{i}}=2\eta \,\eta _{x_{i}}u_{x_{k}}\Phi (|Du|-1)_{+}+\eta
^{2}u_{x_{i}x_{k}}\Phi (|Du|-1)_{+}+\eta ^{2}u_{x_{k}}\Phi ^{\prime
}(|Du|-1)_{+}[(|Du|-1)_{+}]_{x_{i}}.
\end{equation*}%
Proceeding along the lines of \cite{mar96}, we therefore deduce that 
\begin{eqnarray*}
&&\int_{\Omega }2\eta \Phi (|Du|-1)_{+}\sum_{i,j=1}^{n}f_{\xi _{i}\xi
_{j}}(Du)u_{x_{j}x_{k}}\eta _{x_{i}}u_{x_{k}}\,dx \\
&&+\int_{\Omega }\eta ^{2}\Phi (|Du|-1)_{+}\sum_{i,j=1}^{n}f_{\xi _{i}\xi
_{j}}(Du)u_{x_{j}x_{k}}u_{x_{i}x_{k}}\,dx \\
&&+\int_{\Omega }\eta ^{2}\Phi ^{\prime }(|Du|-1)_{+}\sum_{i,j=1}^{n}f_{\xi
_{i}\xi _{j}}(Du)u_{x_{j}x_{k}}u_{x_{k}}[(|Du-1|)_{+}]_{x_{i}}\,dx=0.
\end{eqnarray*}%
We estimate the first integral in the previous equation by using the
Cauchy-Schwarz inequality and the Young inequality so that 
\begin{equation*}
\begin{split}
& \left\vert \int_{\Omega }2\eta \Phi (|Du|-1)_{+}\sum_{i,j=1}^{n}f_{\xi
_{i}\xi _{j}}(Du)u_{x_{j}x_{k}}\eta _{x_{i}}u_{x_{k}}\,dx\right\vert \\
& \quad \leq \int_{\Omega }2\Phi (|Du|-1)_{+}\left( \eta
^{2}\sum_{i,j=1}^{n}f_{\xi _{i}\xi
_{j}}(Du)u_{x_{i}x_{k}}u_{x_{j}x_{k}}\right) ^{\frac{1}{2}}\left(
\sum_{i,j=1}^{n}f_{\xi _{i}\xi _{j}}(Du)\eta _{x_{i}}u_{x_{k}}\eta
_{x_{j}}u_{x_{k}}\right) ^{\frac{1}{2}}\,dx \\
& \quad \leq \frac{1}{2}\int_{\Omega }\eta ^{2}\Phi
((|Du|-1)_{+})\sum_{i,j=1}^{n}f_{\xi _{i}\xi
_{j}}(Du)u_{x_{i}x_{k}}u_{x_{j}x_{k}}\,dx \\
& \qquad +2\int_{\Omega }\Phi ((|Du|-1)_{+})\sum_{i,j=1}^{n}f_{\xi _{i}\xi
_{j}}(Du)\eta _{x_{i}}u_{x_{k}}\eta _{x_{j}}u_{x_{k}}\,dx.
\end{split}%
\end{equation*}%
Therefore we deduce 
\begin{equation*}
\begin{split}
\frac{1}{2}\int_{\Omega }\eta ^{2}\Phi (|Du|-1)_{+}& \sum_{i,j=1}^{n}f_{\xi
_{i}\xi _{j}}(Du)u_{x_{i}x_{k}}u_{x_{j}x_{k}}\,dx \\
& +\int_{\Omega }\eta ^{2}\Phi ^{\prime }(|Du|-1)_{+}\sum_{i,j=1}^{n}f_{\xi
_{i}\xi _{j}}(Du)u_{x_{k}}[(|Du-1|)_{+}]_{x_{i}}\,dx \\
& \leq 2\int_{\Omega }\Phi (|Du|-1)_{+}\sum_{i,j=1}^{n}f_{\xi _{i}\xi
_{j}}(Du)\eta _{x_{i}}u_{x_{k}}\eta _{x_{j}}u_{x_{k}}\,dx.
\end{split}%
\end{equation*}%
Since a.e.~in $\Omega $ 
\begin{equation*}
\lbrack (|Du|-1)_{+}]_{x_{i}}=%
\begin{cases}
(|Du|)_{x_{i}}=\frac{1}{|Du|}\sum_{k}u_{x_{i}x_{k}}u_{x_{k}} & \text{ if $%
|Du|>1$,} \\ 
0 & \text{ if $|Du|\leq 1$,}%
\end{cases}%
\end{equation*}%
by summing up in the previous chain of inequalities with respect to $%
k=1,\dots ,n$ we obtain 
\begin{equation*}
\sum_{k=1}^{n}\sum_{i,j=1}^{n}f_{\xi _{i}\xi
_{j}}(Du)u_{x_{j}x_{k}}u_{x_{k}}[(|Du|-1)_{+}]_{x_{i}}=|Du|%
\sum_{i,j=1}^{n}f_{\xi _{i}\xi
_{j}}(Du)[(|Du-1|)_{+}]_{x_{j}}[(|Du|-1)_{+}]_{x_{i}}
\end{equation*}%
therefore we deduce the estimate 
\begin{equation*}
\begin{split}
\int_{\Omega }\eta ^{2}& \Phi (|Du|-1)_{+}\sum_{k,i,j=1}^{n}f_{\xi _{i}\xi
_{j}}(Du)u_{x_{j}x_{k}}u_{x_{i}x_{k}}\,dx \\
& +\int_{\Omega }\eta ^{2}|Du|\Phi ^{\prime
}(|Du|-1)_{+}\sum_{i,j=1}^{n}f_{\xi _{i}\xi
_{j}}(Du)[(|Du-1|)_{+}]_{x_{j}}[(|Du-1|)_{+}]_{x_{i}}\,dx \\
& \leq 4\int_{\Omega }\Phi (|Du|-1)_{+}\sum_{k,i,j=1}^{n}f_{\xi _{i}\xi
_{j}}(Du)\eta _{x_{i}}u_{x_{k}}\eta _{x_{j}}u_{x_{k}}\,dx.
\end{split}%
\end{equation*}%
Using the inequality $|D(|Du|-1)_{+}|^{2}\leq \,|D^{2}u|^{2}$ and the
ellipticity condition in \eqref{H}$_{1}$ we obtain 
\begin{equation}
\begin{split}
\int_{\Omega }& \eta ^{2}[\Phi (|Du|-1)_{+}+|Du|\Phi ^{\prime }(|Du|-1)_{+}]{%
g_{1}(1+(|Du|-1)_{+}))}\,|D(|Du|-1)_{+}|^{2}\,dx \\
& =\int_{\Omega }\eta ^{2}[\Phi (|Du|-1)_{+}+|Du|\Phi ^{\prime }(|Du|-1)_{+}]%
{g_{1}(|Du|)}\,|D(|Du|-1)_{+}|^{2}\,dx \\
& \leq \,4\,\int_{\Omega }|D\eta |^{2}\,\Phi
(|Du|-1)_{+}\,g_{2}(|Du|)\,|Du|^{2}\,dx \\
& =\,4\,\int_{\Omega }|D\eta |^{2}\,\Phi
(|Du|-1)_{+}\,g_{2}(1+(|Du|-1)_{+}))\,|Du|^{2}\,dx.
\end{split}
\label{(25)Jota}
\end{equation}%
Let us define 
\begin{equation}
G(t)=1+\int_{0}^{t}\sqrt{\Phi (s)\,g_{1}(1+s)}\,ds\qquad \forall t\geq \,0.
\label{defG}
\end{equation}%
By Jensen's inequality and the monotonicity of $\Phi $, since $t\mapsto
tg_{2}(t)$ is increasing, 
\begin{equation*}
\begin{split}
G(t)& =1+\int_{0}^{t}\sqrt{\Phi (s)(1+s)g_{1}(1+s)\frac{1}{1+s}}\,ds\leq
1+\int_{0}^{t}\sqrt{\Phi (s)(1+s)g_{2}(1+s)\frac{1}{1+s}}\,ds \\
& \leq 1+\sqrt{\Phi (t)(1+t)g_{2}(1+t)}\int_{0}^{t}\frac{1}{\sqrt{1+s}}%
\,ds\leq 1+2\sqrt{\Phi (t)(1+t)g_{2}(1+t)}\,\sqrt{1+t},
\end{split}%
\end{equation*}%
hence $[G(t)]^{2}\leq 8\left[ 1+\Phi (t)(1+t)^{2}g_{2}(1+t)\right] $. On the
other hand 
\begin{equation*}
\begin{split}
|D& (\eta (G((|Du|-1)_{+})|^{2} \\
& \leq \,2\,|D\eta |^{2}[G((|Du|-1)_{+})]^{2}+2\eta ^{2}[G^{\prime
}((|Du|-1)_{+})]^{2}\,|D((|Du|-1)_{+})|^{2} \\
& \leq 16\,|D\eta |^{2}\,(1+\Phi (|Du|-1)_{+}\,g_{2}(|Du|)|Du|^{2})+2\,\eta
^{2}\,\Phi (|Du|-1)_{+}g_{1}(|Du|)\,|D(|Du|)|^{2}.
\end{split}%
\end{equation*}%
Since $\Phi (|Du\left( x\right) |-1)_{+}=0$ when $|Du\left( x\right) |\leq 1$%
, by \eqref{(25)Jota} we get 
\begin{equation}
\begin{split}
& \int_{\Omega }|D(\eta \,G((|Du|-1)_{+})|^{2}\,dx \\
& \leq \,24\int_{\Omega }|D\eta |^{2}\,(1+\Phi
(|Du|-1)_{+}g_{2}(|Du|)|Du|^{2})\,dx \\
& =24\int_{\Omega }|D\eta |^{2}\,(1+\Phi
(|Du|-1)_{+}\,g_{2}(1+(|Du|-1)_{+})\,(1+(|Du|-1)_{+})^{2}\,dx.
\end{split}
\label{Sobolev3}
\end{equation}%
Let us assume 
\begin{equation}
\Phi (t)=(1+t)^{\gamma -2}t^{2}\qquad \gamma \geq 0.  \label{defPhi}
\end{equation}%
By the Sobolev inequality, there exists a constant $c_{S}$ such that 
\begin{equation}
\left\{ \int_{\Omega }[\eta \,G((|Du|-1)_{+})]^{2^{\ast }}\,dx\right\}
^{2/2^{\ast }}\leq \,c_{S}\,\int_{\Omega }|D(\eta (G(|Du|-1)_{+}))|^{2}\,dx
\label{Sobolev1}
\end{equation}%
where $2^{\ast }=\frac{2n}{n-2}$ if $n>2$ and a number greater than $\frac{2%
}{1-\beta }$ if $n=2$. We apply \eqref{eq:int1} with the choice $%
t=(|Du|-1)_{+}$ 
\begin{eqnarray*}
G((|Du|-1)_{+}) &=&1+\int_{0}^{(|Du|-1)_{+}}(1+s)^{\frac{\gamma -2}{2}}s%
\sqrt{g_{1}(1+s)}\,ds \\
&\geq &C_{3}\left[ 1+g_{2}(1+(|Du|-1)_{+})^{\frac{1}{2^{\ast }}}\frac{%
(1+(|Du|-1)_{+})^{\frac{\gamma }{2}+1-\beta }}{\left( \frac{\gamma }{2}%
+1-\beta \right) ^{2}}\right]
\end{eqnarray*}%
thus by \eqref{Sobolev3} we obtain that there exists $c=c(C_{3})>0$ such
that, for all $\gamma \geq 0$, 
\begin{equation}
\begin{split}
& \left\{ \int_{\Omega }\eta ^{2^{\ast }}(1+(1+(|Du|-1)_{+})^{(\gamma
+2-2\beta )\frac{2^{\ast }}{2}}\,g_{2}(1+(|Du|-1)_{+}))\,dx\right\} ^{\frac{2%
}{2^{\ast }}} \\
& \qquad \leq \,16\,c\,\left( \frac{\gamma }{2}+1-\beta \right)
^{4}\,\int_{\Omega }|D\eta |^{2}\,(1+(1+(|Du|-1)_{+})^{\gamma
+2}\,g_{2}(1+(|Du|-1)_{+})\,dx \\
& \qquad \leq \,c\,\left( \gamma +2\right) ^{4}\,\int_{\Omega }|D\eta
|^{2}\, \left[ 1+(1+(|Du|-1)_{+})^{\gamma +2}\,g_{2}(1+(|Du|-1)_{+}\right]
\,dx
\end{split}%
\end{equation}%
where we used once more \eqref{Sobolev3} and \eqref{Sobolev1}. The iteration
process follows now the arguments contained in \cite{MP}; for the sake of
clarity we focus on the main steps. From now on, we label the constants;
this will be useful in the sequel. We set $\delta :=\left( \gamma +2\right) $
and we notice that, since $\gamma \geq 0$, then $\delta \geq 2$. Then 
\begin{equation}
\begin{split}
& \left\{ \int_{B_{\rho }}\left[ 1+(1+(|Du|-1)_{+})^{(\delta -2\beta )\frac{%
2^{\ast }}{2}}\,\,g_{2}(1+(|Du|-1)_{+}\right] \,dx\right\} ^{\frac{2}{%
2^{\ast }}} \\
& \qquad \leq \,c_{1}\,\left( \frac{\delta ^{2}}{R-\rho }\right)
^{2}\int_{B_{R}}\left[ 1+(1+(|Du|-1))_{+}^{\delta }g_{2}(1+(|Du|-1)_{+})%
\right] \,dx,
\end{split}
\label{MP1}
\end{equation}%
$c_{1}=c_{1}(C_{3})>0$, for all $\delta \geq 2$. We fix $\bar{\rho}$ and $%
\bar{R}$ such that $\bar{\rho}<\bar{R}$ and we introduce the decreasing
sequence of radii $\{\rho _{i}\}_{i\geq 0}$ 
\begin{equation*}
\rho _{i}=\bar{\rho}+\frac{\bar{R}-\bar{\rho}}{2^{i}},\qquad \forall i\geq 0,
\end{equation*}%
observing that $\bar{\rho}<\rho _{i+1}<\rho _{i}<\bar{R}=\rho _{0}$.
Correspondingly we define as well the increasing sequence of exponents $%
\left\{ \delta _{i}\right\} _{i\geq 0}$ such that 
\begin{equation*}
\delta _{0}=2\qquad \text{and}\qquad \delta _{i+1}=(\delta _{i}-2\beta )\,%
\frac{2^{\ast }}{2},\qquad \forall i\geq 0.
\end{equation*}%
First of all we check that $\delta _{i}\geq 2$ for all $i\geq 0$. By
induction this is equivalent to require $\beta <1-\frac{2}{2^{\ast }}$ that
is $2^{\ast }>\frac{2}{1-\beta }$ and this is always satisfied.

We can rewrite \eqref{MP1} with $\rho =\rho _{i+1}$, $R=\rho _{i}$, $\delta
=\delta _{i}$. For every $i\geq 0$ we then obtain 
\begin{equation*}
\begin{split}
& \left\{ \int_{B_{\rho _{i+1}}}\left[ 1+(1+(|Du|-1)_{+})^{\delta
_{i+1}}g_{2}((1+|Du|-1)_{+})\right] \,dx\right\} ^{\frac{2}{2^{\ast }}} \\
& \qquad \leq \,c_{1}\,\left( \frac{\delta _{i}^{2}\,2^{i+1}}{\bar{R}-\bar{%
\rho}}\right) ^{2}\int_{B_{\rho _{i}}}\left[ 1+(1+(|Du|-1)_{+})^{\delta
_{i}}g_{2}(1+(|Du|-1)_{+})\right] \,dx.
\end{split}%
\end{equation*}%
By iterating the previous inequality, we are able to deduce 
\begin{equation*}
\begin{split}
& \left\{ \int_{B_{\rho _{i+1}}}\left[ 1+(1+(|Du|-1)_{+})^{\delta
_{i+1}}g_{2}(1+(|Du|-1)_{+})\right] \,dx\right\} ^{\left( \frac{2}{2^{\ast }}%
\right) ^{i+1}} \\
& \qquad \leq \,c_{2}\,\int_{B_{\bar{R}}}\left[
1+(1+(|Du|-1)_{+})^{2}g_{2}(1+(|Du|-1)_{+})\right] \,dx,
\end{split}%
\end{equation*}%
where, by induction we computed 
\begin{equation}
\begin{split}
\delta _{i+1}& =2\left( \frac{2^{\ast }}{2}\right) ^{i+1}-2\beta
\sum_{k=1}^{i+1}\left( \frac{2^{\ast }}{2}\right) ^{k}=2\left( \frac{2^{\ast
}}{2}\right) ^{i+1}\left[ 1-\beta \sum_{k=0}^{i}\left( \frac{2}{2^{\ast }}%
\right) ^{k}\right] \\
& =2\left( \frac{2^{\ast }}{2}\right) ^{i+1}\left[ 1-\beta \frac{1-\left( 
\frac{2}{2^{\ast }}\right) ^{i+1}}{1-\frac{2}{2^{\ast }}}\right] =2\left( 
\frac{2^{\ast }}{2}\right) ^{i+1}\left[ 1-\beta \frac{2^{\ast }}{2^{\ast }-2}%
\right] +2\beta \frac{2^{\ast }}{2^{\ast }-2}
\end{split}
\label{ipassaggio}
\end{equation}%
and where 
\begin{equation*}
\begin{split}
c_{2}& =\,\prod_{k=0}^{+\infty }\left[ \frac{c_{1}}{(\bar{R}-\bar{\rho})^{2}}%
\,\delta _{k}^{2}\,2^{k+1}\right] ^{\left( \frac{2}{2^{\ast }}\right)
^{k}}\leq \prod_{k=0}^{+\infty }\left[ \frac{c_{1}}{(\bar{R}-\bar{\rho})^{2}}%
\,4\left( \frac{2^{\ast }}{2}\right) ^{2k}\,2^{k+1}\right] ^{\left( \frac{2}{%
2^{\ast }}\right) ^{k}} \\
& \leq \left( \frac{8c_{1}}{(\bar{R}-\bar{\rho})^{2}}\right)
^{\sum_{k=0}^{+\infty }\left( \frac{2}{2^{\ast }}\right) ^{k}}\,(2^{\ast
})^{\sum_{k=0}^{\infty }2k\left( \frac{2}{2^{\ast }}\right) ^{k}}=:\frac{%
c_{3}}{(\bar{R}-\bar{\rho})^{\frac{2\,2^{\ast }}{2^{\ast }-2}}}.
\end{split}%
\end{equation*}%
\newline
Now, by \eqref{H}$_{2}$ we have that, 
\begin{equation}
1+t^{2}g_{2}(t)\leq \frac{t}{g_{2}(1)}g_{2}(1)+t^{2}g_{2}(t)\leq \left( 
\frac{1}{g_{2}(1)}+1\right) t^{2}g_{2}(t).  \label{+1}
\end{equation}%
So we can write 
\begin{equation*}
\begin{split}
& \left[ \int_{B_{\bar{\rho}}}(1+(|Du|-1)_{+})^{\delta
_{i+1}}g_{2}(1+(|Du|-1)_{+})\,dx\right] ^{\left( \frac{2}{2^{\ast }}\right)
^{i+1}} \\
& \qquad \leq \,{\frac{c_{4}}{(\bar{R}-\bar{\rho})^{\frac{2\,2^{\ast }}{%
2^{\ast }-2}}}\int_{B_{\bar{R}}}\left[
(1+(|Du|-1)_{+})^{2}g_{2}(1+(|Du|-1)_{+})\right] \,dx,}
\end{split}%
\end{equation*}%
{$c_{4}=c_{3}\left( \frac{1}{g_{2}(1)}+1\right) $}. Finally, by %
\eqref{ipassaggio}, $\delta _{i+1}\left( \frac{2}{2^{\ast }}\right)
^{i+1}\rightarrow \left[ 2-\beta \frac{2\,2^{\ast }}{2^{\ast }-2}\right] $
as $i\rightarrow +\infty $, so passing to the limit we obtain 
\begin{equation*}
\begin{split}
\left( \Vert 1+(|Du|-1)_{+}\Vert _{L^{\infty }(B_{\rho })}\right) & ^{2-%
\frac{22^{\ast }}{2^{\ast }-2}\beta } \\
& =\lim_{i\rightarrow +\infty }\left[ \int_{B_{\bar{\rho}%
}}(1+(|Du|-1)_{+})^{\delta _{i+1}}g_{2}(1+(|Du|-1)_{+})\,dx\right] ^{\left( 
\frac{2}{2^{\ast }}\right) ^{i+1}} \\
& \leq \,{\ \frac{c_{5}}{(\bar{R}-\bar{\rho})^{\frac{2\,2^{\ast }}{2^{\ast
}-2}}}\int_{B_{\bar{R}}}(1+(|Du|-1)_{+})^{2}g_{2}(1+(|Du|-1)_{+})\,dx.}
\end{split}%
\end{equation*}%
Therefore \eqref{eq:normainfinto} is proved, in fact $n=\frac{2\,2^{\ast }}{%
2^{\ast }-2}$ if $n>2$ and $2<\frac{2\,2^{\ast }}{2^{\ast }-2}$ if $n=2$.
Notice that $0<2-n\beta <1$ since $\frac{1}{n}<\beta <\frac{2}{n}$.
\end{proof}



\begin{lemma}
\label{step 2} Assume that $f$ satisfies the assumptions of previous lemma
and \eqref{ab} and $u\in W_{\mathrm{loc}}^{1,\infty }(\Omega )$ is a local
minimizer of \eqref{energy integral}. Then for every $0<\rho <R$, $\bar{B}%
_{R}\subset\Omega $, there exists a positive constant $C=C(\rho,\, R,\,
C_{1},\, C_{2},\, \alpha,\, \beta,\, \mu,\, g_{2}(t_{0}))$, such that 
\begin{equation}
\Vert Du\Vert _{L^{\infty }(B_{\rho })}\leq \,C\,\left\{\frac 1{(R-\rho)^n}
\int_{B_{R}}(1+f(Du))\,dx\right\} ^{\theta}  \label{eq:apriori}
\end{equation}
with $\theta = \frac{(2-\mu)\alpha}{2-\mu-\alpha(n\beta-\mu) }$.
\end{lemma}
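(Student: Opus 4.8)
The plan is to bootstrap the estimate \eqref{eq:normainfinto} of Lemma~\ref{step 1}, which is available since $f$ satisfies the hypotheses of that lemma, into \eqref{eq:apriori}: the only real work is to replace the integral $\int_{B_R}(1+(|Du|-1)_+)^{2}g_2(1+(|Du|-1)_+)\,dx$ on its right-hand side by a constant times $\int_{B_R}(1+f(Du))\,dx$. This will be carried out in a single step with the interpolation Lemma~\ref{interpolation}, once the correct exponents are read off from \eqref{H}$_2$ and \eqref{H}$_4$. Throughout put $w:=1+(|Du|-1)_+\geq 1$, $M(\rho):=\|w\|_{L^{\infty}(B_\rho)}$ (so $\|Du\|_{L^{\infty}(B_\rho)}\leq M(\rho)$ and it suffices to bound $M(\rho)$), and $v:=w^{2}g_2(w)$; note $v\in L^{\infty}_{\mathrm{loc}}(\Omega)$ because $u\in W^{1,\infty}_{\mathrm{loc}}(\Omega)$ and $g_2$ is continuous, and $v$ is precisely the integrand on the right of \eqref{eq:normainfinto}.

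First I would record two elementary pointwise bounds. Writing $v=w^{2-\mu}\,(w^{\mu}g_2(w))$ and using that $t\mapsto t^{\mu}g_2(t)$ is decreasing together with $w\geq 1$ gives $v\leq g_2(1)\,w^{2-\mu}$; writing $v=w\,(wg_2(w))$ and using that $t\mapsto tg_2(t)$ is increasing gives $v\geq g_2(1)\,w$. Since $2-\mu\geq 1>0$, these give $g_2(1)M(\rho)\leq\|v\|_{L^{\infty}(B_\rho)}\leq g_2(1)M(\rho)^{2-\mu}$. Now set $\vartheta:=\frac{2-\mu}{2-n\beta}$; by \eqref{H}$_3$ we have $1<n\beta<2$ and $\mu\leq 1$, so $0<2-n\beta<2-\mu$ and $\vartheta>1$. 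Combining the upper bound just obtained with \eqref{eq:normainfinto},
\[
\|v\|_{L^{\infty}(B_\rho)}^{1/\vartheta}\leq g_2(1)^{1/\vartheta}M(\rho)^{(2-\mu)/\vartheta}=g_2(1)^{1/\vartheta}M(\rho)^{2-n\beta}\leq\frac{g_2(1)^{1/\vartheta}c_4}{(R-\rho)^{n}}\int_{B_R}v\,dx
\]
for every $0<\rho<R$ with $\bar B_R\subset\Omega$; thus $v$ satisfies hypothesis \eqref{assumption in the Lemma} of Lemma~\ref{interpolation} with this $\vartheta$.

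The algebraic heart of the matter is that \eqref{ab}, namely $2-\mu-\alpha(n\beta-\mu)>0$, is equivalent to $\frac1\alpha>\frac{n\beta-\mu}{2-\mu}=\frac{\vartheta-1}{\vartheta}$ (using $2-\mu>0$ and $\alpha>0$); since moreover $\frac1\alpha<1$, the value $\lambda:=\frac1\alpha$ is an admissible exponent in Lemma~\ref{interpolation}. That lemma then yields, with $E:=\frac{1-\vartheta(1-\lambda)}{\vartheta}$ (which is $>0$ precisely because of \eqref{ab}),
\[
\|v\|_{L^{\infty}(B_\rho)}^{E}\leq\frac{c_{\lambda}}{(R-\rho)^{n}}\int_{B_R}v^{1/\alpha}\,dx .
\]
To estimate the last integrand I would invoke \eqref{H}$_4$ (here $t_0=1$): where $|Du|\geq 1$ one has $v=|Du|^{2}g_2(|Du|)\leq C_2\,(1+f(Du))^{\alpha}$, and where $|Du|<1$ one has $v=g_2(1)\leq g_2(1)(1+f(Du))^{\alpha}$; hence $v\leq C(1+f(Du))^{\alpha}$ and $v^{1/\alpha}\leq C^{1/\alpha}(1+f(Du))$ pointwise. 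Inserting this and the lower bound $\|v\|_{L^{\infty}(B_\rho)}\geq g_2(1)M(\rho)$ into the last display gives $M(\rho)^{E}\leq\frac{C}{(R-\rho)^{n}}\int_{B_R}(1+f(Du))\,dx$; a direct computation gives $E=\frac{2-\mu-\alpha(n\beta-\mu)}{\alpha(2-\mu)}$, hence $1/E=\theta$, and since $\|Du\|_{L^{\infty}(B_\rho)}\leq M(\rho)$ this is exactly \eqref{eq:apriori}. Tracking the constants, $c_4$ depends on $C_1,\beta,g_2(t_0)$ (Lemma~\ref{step 1}), $c_\lambda$ on $n,\beta,\mu,\alpha$ (Lemma~\ref{interpolation} applied with $\vartheta,\lambda$), and the remaining factors on $C_2,g_2(t_0),\rho,R$, as claimed.

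I do not expect a genuine conceptual obstacle: the statement is essentially a matching of exponents. The one delicate point is to choose $\vartheta$ and $\lambda$ so that, simultaneously, $\vartheta\geq 1$ (forced by $\mu\leq 1<n\beta<2$), the admissibility condition $\lambda>(\vartheta-1)/\vartheta$ of Lemma~\ref{interpolation} coincides exactly with \eqref{ab}, and the output exponent $1/E$ collapses to $\theta$; the pair $\vartheta=(2-\mu)/(2-n\beta)$, $\lambda=1/\alpha$ achieves all three at once. A minor care point is that the hypothesis \eqref{assumption in the Lemma} must hold for every admissible pair $\rho<R$, which is immediate because both \eqref{eq:normainfinto} and the pointwise bounds on $v$ hold for all such pairs.
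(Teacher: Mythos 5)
Your proof is correct and follows essentially the same route as the paper: the same choice of $v=w^{2}g_{2}(w)$ (the paper's $V$), the same exponents $\vartheta=(2-\mu)/(2-n\beta)$ and $\lambda=1/\alpha$ in Lemma~\ref{interpolation}, the same use of \eqref{H}$_{2}$ to convert \eqref{eq:normainfinto} into the hypothesis of the interpolation lemma, and the same use of \eqref{H}$_{4}$ to replace $v^{1/\alpha}$ by $1+f(Du)$. Your version merely makes the two pointwise bounds $g_{2}(1)w\leq v\leq g_{2}(1)w^{2-\mu}$ explicit where the paper writes $C_{\mu}$, which is a cosmetic difference.
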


\begin{proof}
Set 
\begin{equation*}
V = V(x) =(1 + (|Du|-1)_+)^2 g_2(1 + (|Du|-1)_+).
\end{equation*}
By \eqref{H}$_2$ we have 
\begin{equation*}
\|V\|_{L^{\infty }(B_{\rho })}\leq C_\mu\left(\|1+(|Du|-1)_{+}\Vert
_{L^{\infty }(B_{\rho })}\right)^{2-\mu}
\end{equation*}
so inequality \eqref{eq:normainfinto} becomes 
\begin{equation*}
\left(\Vert V\Vert _{L^{\infty}(B_{\rho })}\right) ^{\frac{2-n\beta}{2-\mu}
} \leq \,\frac{c_5}{({R}-{\rho})^{n}} \int_{B_{R}}V(x)\,dx.
\end{equation*}
\newline
Let $\alpha>1$ satisfies \eqref{ab} we can apply Lemma \ref{interpolation}
with $v=V$, $\vartheta =\frac{2-\mu}{2-n\beta }$ and $\lambda=\frac 1\alpha$%
. In fact we have 
\begin{equation}  \label{alpha beta}
\alpha(n\beta-\mu)<2-\mu \quad\iff\quad \frac{2-\mu}{2-n\beta }\left(1-\frac
1\alpha\right)<1 \quad\iff\quad \vartheta(1-\lambda)<1 .
\end{equation}
Therefore we deduce the existence of a constant $c_{6}$ such that, for every 
$\varrho <R$, the following estimate holds 
\begin{equation*}
\left\Vert V\right\Vert _{L^{\infty }\left( B_{\rho }\right) } ^{\frac{%
1-\vartheta \left( 1-\lambda \right) }{\vartheta }} =\left\Vert
V\right\Vert_{L^{\infty }\left( B_{\rho }\right) } ^{\frac{%
2-\mu-\alpha(n\beta -\mu) }{(2-\mu)\alpha}} \leq \frac{c_6}{\left( R-\rho
\right) ^{n}}\int_{B_{R}}\left\vert V\right\vert ^{\frac 1\alpha }\,dx.
\end{equation*}
Now, by \eqref{H}$_4$, if $|Du|\geq 1$, 
\begin{equation*}
V=(1 + (|Du|-1)_+)^2 g_2(1+(|Du|-1)_+) = |Du|^2g_2(|Du|) \le \, C_2 \, (1 +
f(Du) )^{\alpha},
\end{equation*}
otherwise 
\begin{equation*}
V=(1 + (|Du|-1)_+)^2 g_2(1+(|Du|-1)_+) = g_2(1) \le \, g_2(1) \, (1 + f(Du)
)^{\alpha}.
\end{equation*}
Therefore 
\begin{equation}  \label{ultima}
\Vert V\Vert _{L^{\infty }\left( B_{\rho }\right) } \le \,\left[\frac{ c_7}{%
(R-\rho)^n} \int_{B_{R_0}} (1 + f(Du) )\, dx \right ]^{\frac{(2-\mu)\alpha}{%
2-\mu-\alpha(n\beta -\mu) }}
\end{equation}
holds for $c_7:= {\max\{C_2\,,g_2(1)\}^{\frac 1{\alpha}}} c_6$. Finally,
since by \eqref{H}$_2$ $V\geq g_2(1)|Du|$, \eqref{eq:apriori} holds for $C={%
c_7}^{\frac{(2-\mu)\alpha}{2-\mu-\alpha(n\beta -\mu) }}/g_2(1)$ and $\theta=%
\frac{(2-\mu)\alpha}{2-\mu-\alpha(n\beta-\mu) }$.
\end{proof}

\medskip


\section{Proofs of the results of Section \protect\ref{Section Introduction}}

We use the following approximation Lemma.

\begin{lemma}
Assume that $f:\mathbb{R}^n\to[0\,,+\infty)$ be a convex function and $v \in
W^{1,1}_{\mathrm{loc}}(\Omega)$ such that $f(D v) \in L_{\mathrm{loc}%
}^1(\Omega)$. For $\Omega^{\prime}$ open set compactly contained in $\Omega$
and $\varphi_\varepsilon$ be smooth mollifiers with support in $%
B_\varepsilon(0)$ we define $v_{\varepsilon}=v \ast \varphi _{\varepsilon}
\in{\mathcal{C}}^\infty(\Omega^{\prime})$ i.e. 
\begin{equation}  \label{molli}
v_{\varepsilon}(x)=\int_{B_\varepsilon(0)}\varphi _\varepsilon(y)v(x-y)\,dy,
\qquad x\in \Omega^{\prime}.
\end{equation}
Then, for every open ball $B_\rho$ compactly contained in $\Omega^{\prime}$, 
\begin{equation}  \label{eq:lim}
\lim_{\varepsilon \rightarrow 0} \int_{B_{\rho}} f(Dv_{\varepsilon})\,dx =
\int_{B_{\rho}} f(Dv) \, dx .
\end{equation}
\end{lemma}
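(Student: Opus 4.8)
The plan is to establish the two one-sided bounds
\[
\limsup_{\varepsilon\to0}\int_{B_\rho}f(Dv_\varepsilon)\,dx\le\int_{B_\rho}f(Dv)\,dx
\qquad\text{and}\qquad
\liminf_{\varepsilon\to0}\int_{B_\rho}f(Dv_\varepsilon)\,dx\ge\int_{B_\rho}f(Dv)\,dx,
\]
which together force the limit in \eqref{eq:lim} to exist and to equal $\int_{B_\rho}f(Dv)\,dx$. As a preliminary bookkeeping step I would fix $d>0$ with $\mathrm{dist}(\overline{B_\rho},\partial\Omega')>d$, so that for $\varepsilon<d$ one has $Dv_\varepsilon=(Dv)\ast\varphi_\varepsilon$ pointwise on $B_\rho$, the convolution involving only values of $Dv$ on the $d$-neighbourhood $U$ of $B_\rho$, a set compactly contained in $\Omega$; in particular $f(Dv)\in L^1(U)$.

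For the upper bound I would invoke Jensen's inequality: since $\varphi_\varepsilon\ge0$ and $\int\varphi_\varepsilon\,dy=1$, convexity of $f$ yields, for a.e.\ $x\in B_\rho$,
\[
f\big(Dv_\varepsilon(x)\big)=f\!\left(\int\varphi_\varepsilon(y)\,Dv(x-y)\,dy\right)\le\int\varphi_\varepsilon(y)\,f\big(Dv(x-y)\big)\,dy=\big(f(Dv)\ast\varphi_\varepsilon\big)(x).
\]
Integrating over $B_\rho$ and using the standard fact that $f(Dv)\ast\varphi_\varepsilon\to f(Dv)$ in $L^1(B_\rho)$ as $\varepsilon\to0$ (legitimate since $f(Dv)\in L^1(U)$) gives the desired $\limsup$ estimate.

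For the lower bound I would use that $v\in W^{1,1}_{\mathrm{loc}}(\Omega)$ implies $Dv_\varepsilon\to Dv$ in $L^1(B_\rho;\mathbb R^n)$; hence from any sequence $\varepsilon_j\to0$ one can extract a subsequence along which $Dv_{\varepsilon_{j_k}}\to Dv$ a.e.\ in $B_\rho$. Since a finite convex function on $\mathbb R^n$ is continuous, $f(Dv_{\varepsilon_{j_k}})\to f(Dv)$ a.e., and because $f\ge0$ Fatou's lemma gives $\liminf_k\int_{B_\rho}f(Dv_{\varepsilon_{j_k}})\,dx\ge\int_{B_\rho}f(Dv)\,dx$; a routine subsequence-of-subsequences argument then promotes this to the full $\liminf$. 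Combining the two bounds proves \eqref{eq:lim}. I do not expect a genuine obstacle here: the only points needing care are the domain bookkeeping that makes $Dv_\varepsilon=(Dv)\ast\varphi_\varepsilon$ hold and guarantees integrability of $f(Dv)$ on the enlarged set $U$, and the subsequence device used to pass from convergence along subsequences to the $\liminf$ over the whole family.
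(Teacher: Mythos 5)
Your proof is correct and follows essentially the same route as the paper: Jensen's inequality gives the $\limsup$ bound, and lower semicontinuity under $L^1$ convergence of the gradients gives the $\liminf$ bound. The paper invokes lower semicontinuity of the convex integral functional as a known fact and passes the $\limsup$ step through Fubini and the monotone shrinking of $B_{\rho+\varepsilon}$ to $B_\rho$, whereas you re-derive the $\liminf$ inequality from scratch via a.e.\ convergent subsequences and Fatou's lemma and obtain the $\limsup$ inequality from $L^1$ convergence of the mollified function $f(Dv)\ast\varphi_\varepsilon$; these are only presentational variants of the same argument.
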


\begin{proof}
By Jensen's inequality 
\begin{equation*}
f(Dv_{\varepsilon }(x))\leq \,\int_{B_{\varepsilon }(0)}\rho _{\varepsilon
}(y)\,f(Dv(x-y))\,dy.
\end{equation*}%
By integrating over $B_{\rho }$ for $\varepsilon $ sufficiently small we
obtain 
\begin{equation*}
\begin{split}
\int_{B_{\rho }}f(Dv_{\varepsilon }(x))\,dx\leq & \int_{B_{\varepsilon
}(0)}\rho _{\varepsilon }(y)\int_{B_{\rho }}f(Dv(x-y))\,dx\,dy \\
\leq & \int_{B_{\varepsilon }(0)}\rho _{\varepsilon }(y)\,dy\int_{B_{\rho
+\varepsilon }}f(Dv(x))\,dx\leq \,\int_{B_{\rho +\varepsilon }}f(Dv(x))\,dx
\end{split}%
\end{equation*}%
and then 
\begin{equation*}
\limsup_{\varepsilon \rightarrow 0}\int_{B_{\rho }}f(Dv_{\varepsilon
}(x))\,dx\leq \int_{B_{\rho }}f(Dv(x))\,dx.
\end{equation*}%
By the other hand, $Dv_{\varepsilon }$ converges to $Dv$ in $L^{1}(B_{\rho
}\,,\mathbb{R}^{n})$, then the lower semicontinuity of the integral yields 
\begin{equation*}
\liminf_{\varepsilon \rightarrow 0}\int_{B_{\rho }}f(Dv_{\varepsilon
}(x))\,dx\geq \int_{B_{\rho }}f(Dv(x))\,dx
\end{equation*}%
proving \eqref{eq:lim}.
\end{proof}


\begin{proof}[Proof of Theorem \protect\ref{superlinear}]
Consider the functional \eqref{energy integral} with $f$ satisfying \eqref{H}%
. 
For every $k\in \mathbb{N}$, let us consider the sequence $f_{k}$ defined as
follows (see \cite{MS}): 
\begin{equation}
f_{k}(\xi )=f(\xi )(1-\phi (\xi ))+(f\phi )\ast \eta _{k}(\xi ),  \label{fk}
\end{equation}%
where $\eta _{k}$ are standard mollifiers and $\phi \in \mathcal{C}^{\infty
}(\mathbb{R}^{n})$, $0\leq \phi (\xi )\leq 1$ for every $\xi \in \mathbb{R}%
^{n}$, $\phi (\xi )=1$ if $|\xi |\leq t_{0}+1$ and $\phi (\xi )=0$ if $|\xi
|\geq t_{0}+2$. $f_{k}\in \mathcal{C}^{2}(\mathbb{R}^{n})$ and 
\begin{equation*}
f_{k}(\xi )=%
\begin{cases}
f(\xi ) & |\xi |\geq t_{0}+2 \\ 
(f\phi )\ast \eta _{k}(\xi ) & |\xi |\leq t_{0}+1.%
\end{cases}%
\end{equation*}%
\newline
Therefore the sequence $\{f_{k}\}_{k}$ converges to $f$ uniformly and we can
suppose $|f(\xi )-f_{k}(\xi )|\leq 1$ for every $\xi \in \mathbb{R}^{N}$ and 
$k\in \mathbb{N}$. Moreover, for sufficiently large $k$, $f_{k}$ is a convex
function and $D^{2}f_{k}(\xi )$ is positive defined for $|\xi |>t_{0}+1$.
Since $f_{k}(\xi )=f(\xi )$ for $|\xi |>t_{0}+2$, then \eqref{H} holds with $%
t_{0}+2$ instead of $t_{0}$.

Let $h\in \mathcal{C}^{2}([0,+\infty ))$ be the positive, increasing, convex
function defined by 
\begin{equation*}
h(t)=%
\begin{cases}
\frac{1}{8}(6t^{2}-t^{4}+3) & t\in \lbrack 0\,,1) \\ 
t & t\in \lbrack 1\,,+\infty ).%
\end{cases}%
\end{equation*}%
Observe that $h\in \mathcal{C}^{2}(\mathbb{R}^{2})$, $h^{\prime \prime }$ is
non negative and $h^{\prime \prime }>0$ in $[0\,,1)$. For $k\in \mathbb{N}$
denote 
\begin{equation*}
\tilde{f}_{k}(\xi )=f_{k}(\xi )+\frac{1}{k}\,h\left( \frac{|\xi |}{t_{0}+2}%
\right)
\end{equation*}%
and define the integral functional 
\begin{equation*}
F_{k}(v)=\int_{B_{R}}\tilde{f}_{k}(Dv)\,dx.
\end{equation*}%
Notice that $\tilde{f}_{k}\in \mathcal{C}^{2}(\mathbb{R}^{n})$ and that it
is uniformly convex on compact subsets of $\mathbb{R}^{n}$. Let $B_{R}$ be a
ball compactly contained in $\Omega $ and $u\in W_{\mathrm{loc}%
}^{1,1}(\Omega )$ be a local minimizer of the functional 
\eqref{energy
integral}. Since $u_{\varepsilon }\in {\mathcal{C}}^{2}(\overline{B}_{R})$,
then it verifies the bounded slope condition (see for example \cite%
{Hartman-Stampacchia} and \cite[Theorem 1.1 and Theorem 1.2]{giusti}) and
then $F_{k}$ has unique minimizer $v_{\varepsilon ,k}$ among Lipschitz
continuous functions in $B_{R}$ with boundary value $u_{\varepsilon }$ on $%
\partial B_{R}$. By \eqref{H}$_{1}$ and \cite[(3.3)]{MP}, 
\begin{equation*}
g_{1}(|\xi |)|\lambda |^{2}\leq \sum_{i,j}(\tilde{f}_{k})_{\xi _{i}\xi
_{j}}(\xi )\lambda _{i}\lambda _{j}\leq \left[ g_{2}(|\xi |)|+\frac{1}{%
k(t_{0}+2)}\frac{1}{|\xi |}\right] |\lambda |^{2},
\end{equation*}%
for every $\lambda ,\xi \in \mathbb{R}^{n}$, $|\xi |\geq t_{0}+2$. On the
other hand, since by \eqref{H}$_{2}$ $t\,g_{2}(t)\geq
(t_{0}+2)g_{2}(t_{0}+2) $ for $t\geq t_{0}+2$, 
\begin{equation*}
g_{2}(t)+\frac{1}{k(t_{0}+2)}\frac{1}{t}\leq g_{2}(t)+\frac{g_{2}(t)}{%
k(t_{0}+2)^{2}g_{2}(t_{0}+2)}\leq 2g_{2}(t)
\end{equation*}%
for $k$ sufficiently large. Therefore $\tilde{f}_{k}$ satisfies \eqref{H}
with $t_{0}+2$ instead of $t_{0}$, $2g_{2}$ instead of $g_{2}$ and constants 
$C_{1}$ and $C_{2}$ independent from $k$. Moreover, as $k\rightarrow \infty $%
, $\tilde{f}_{k}\rightarrow f$ uniformly on compact subsets of $\mathbb{R}%
^{n}$ and then 
\begin{equation*}
\lim_{k\rightarrow \infty }\int_{B_{R}}\tilde{f}_{k}(Dv)\,dx=%
\int_{B_{R}}f(Dv)\,dx\qquad \text{ for every }v\in W^{1,\infty }(B_{R}).
\end{equation*}%
\newline
Since $\tilde{f}_{k}$ is uniformly convex on compact sets and $%
v_{\varepsilon ,k}$ is Lipschitz continuous in $B_{R}$, all the assumptions
of Lemma \ref{step 2} hold and, by the a priori estimate \eqref{eq:apriori},
for every $\rho <R$ there exists a positive constant $C$ depending on $\rho $%
, $R$, $\alpha $, $\beta $, $\mu $, $C_{1}$, $C_{2}$, $g_{2}(t_{0})$, but
independent on $k$ such that 
\begin{equation*}
\begin{split}
\Vert Dv_{\varepsilon ,k}\Vert _{L^{\infty }(B_{\rho })}& \leq \,C\left\{ 
\frac{1}{(R-\rho )^{n}}\int_{B_{R}}1+\tilde{f}_{k}(Dv_{\varepsilon
,k})\,dx\right\} ^{\theta } \\
& \leq \,C\left\{ \frac{1}{(R-\rho )^{n}}\int_{B_{R}}1+\tilde{f}%
_{k}(Du_{\varepsilon })\,dx\right\} ^{\theta },
\end{split}%
\end{equation*}%
$\theta =\frac{(2-\mu )\alpha }{2-\mu -\alpha (n\beta -\mu )}$, where the
last inequality depends by the minimality of $v_{\varepsilon ,k}$.
Therefore, for every $\varepsilon >0$ we have 
\begin{equation*}
\limsup_{k\rightarrow \infty }\Vert Dv_{\varepsilon ,k}\Vert _{L^{\infty
}(B_{\rho })}\leq \,C\,\left\{ \frac{1}{(R-\rho )^{n}}\int_{B_{R}}(1+f(Du_{%
\varepsilon })\,dx\right\} ^{\theta }=M_{\varepsilon }.
\end{equation*}%
The sequence $v_{\varepsilon ,k}$ is bounded in $W^{1,\infty }(B_{\rho })$
with respect to $k$, then there exists a subsequence $k_{j}\rightarrow 0$,
such that $\{v_{\varepsilon ,k_{j}}\}$ is weakly$^{\ast }$ convergent to $%
\bar{v}_{\varepsilon }$ in $W^{1,\infty }(B_{\rho })$ and for every $\rho <R$
\begin{equation}
\Vert D\bar{v}_{\varepsilon }\Vert _{L^{\infty }(B_{\rho })}\leq
\,C\,\left\{ \frac{1}{(R-\rho )^{n}}\int_{B_{R}}(1+f(Du_{\varepsilon
})\,dx\right\} ^{\theta }.  \label{normepsilon}
\end{equation}%
We prove that $v_{\varepsilon ,k_{j}}$ converges to $\bar{v}_{\varepsilon }$
in $W^{1,1}(B_{R})$ and then $\bar{v}_{\varepsilon }\in u+W_{0}^{1,1}(B_{R})$%
. Indeed by \eqref{H}$_{5}$ and the minimality of $v_{\varepsilon ,k_{j}}$,
as $j\rightarrow \infty $ we have 
\begin{equation*}
\int_{B_{R}}f(Dv_{\varepsilon ,k_{j}})\,dx\leq 1+\int_{B_{R}}\tilde{f}%
_{k_{j}}(Dv_{\varepsilon ,k_{j}})\,dx\leq 1+\int_{B_{R}}\tilde{f}%
_{k_{j}}(Du_{\varepsilon })\,dx\rightarrow 1+\int_{B_{R}}f(Du_{\varepsilon
})\,dx.
\end{equation*}%
By de la Vall\'{e}e-Poussin Theorem we can choose the sequence $k_{j}$ such
that $Dv_{\varepsilon ,k_{j}}\rightharpoonup D\bar{v}_{\varepsilon }$ in $%
L^{1}(B_{R})$ and then $v_{\varepsilon ,k_{j}}-u_{\varepsilon
}\rightharpoonup (\bar{v}_{\varepsilon }-u_{\varepsilon })\in
W_{0}^{1,1}(B_{R})$. On the other hand, for every $\delta >0$ and for every $%
k$ sufficiently large, $|\tilde{f}_{k}(\xi )-f(\xi )|\leq \delta $, for
every $|\xi |\leq M+1$, therefore by the minimality of $v_{\varepsilon
,k_{j}}$ 
\begin{equation*}
\begin{split}
\int_{B_{\rho }}f(Dv_{\varepsilon ,k_{j}})\,dx& =\int_{B_{\rho
}}(f(Dv_{\varepsilon ,k_{j}})-\tilde{f}_{k_{j}}(Dv_{\varepsilon ,k_{j}}))+%
\tilde{f}_{k_{j}}(Dv_{\varepsilon ,k_{j}})\,dx \\
& \leq \int_{B_{R}}\tilde{f}_{k_{j}}(Du_{\varepsilon })\,dx+\delta |B_{R}|.
\end{split}%
\end{equation*}%
By lower semicontinuity in $W^{1,1}(B_{R})$, passing to the limit for $%
j\rightarrow \infty $, we get 
\begin{equation*}
\begin{split}
\int_{B_{\rho }}f(D\bar{v}_{\varepsilon })\,dx& \leq \liminf_{j\rightarrow
\infty }\int_{B_{\rho }}f(Dv_{\varepsilon ,k_{j}})\,dx\leq
\lim_{j\rightarrow \infty }\int_{B_{R}}\tilde{f}_{k_{j}}(Du_{\varepsilon
})\,dx+\delta |B_{R}| \\
& =\int_{B_{R}}f(Du_{\varepsilon })\,dx+\delta |B_{R}|
\end{split}%
\end{equation*}%
for every $\delta >0$ and $\rho <R$, and then for $\rho \rightarrow R$ and $%
\delta \rightarrow 0$ 
\begin{equation*}
\int_{B_{R}}f(D\bar{v}_{\varepsilon })\,dx\leq \int_{B_{R}}f(Du_{\varepsilon
})\,dx
\end{equation*}%
Again by de la Vall\'{e}e-Poussin Theorem and \eqref{normepsilon}, we have
that there exists a sequence $\varepsilon _{j}\rightarrow 0$ such that $\bar{%
v}_{\varepsilon _{j}}-u_{\varepsilon _{j}}\rightharpoonup \bar{v}-u$ in $%
W_{0}^{1,1}(B_{R})$ and $\{\bar{v}_{\varepsilon _{j}}\}_{j}$ is weakly $%
^{\ast }$ convergent to $\bar{v}$ in $W^{1,\infty }(B_{\rho })$ for every $%
0<\rho <R$. By the lower semicontinuity of the functional 
\begin{equation}
\int_{B_{R}}f(D\bar{v})\,dx\leq \liminf_{j\rightarrow \infty }\int_{B_{R}}f(D%
\bar{v}_{\varepsilon _{j}})\,dx\leq \lim_{j\rightarrow \infty
}\int_{B_{R}}f(Du_{\epsilon _{j}})\,dx=\int_{B_{R}}f(Du)\,dx.  \label{eq:min}
\end{equation}%
Then $\bar{v}$ is a minimizer for \eqref{energy integral} with $\Omega
=B_{R} $. Moreover from \eqref{eq:lim} and \eqref{normepsilon} we have that $%
\bar{v}_{\varepsilon }$ converges to $\bar{v}$ in $W_{\mathrm{loc}%
}^{1,\infty }$ and 
\begin{equation}
\begin{split}
\Vert D\bar{v}\Vert _{L^{\infty }(B_{\rho })}& \leq \,\liminf_{j\rightarrow
\infty }\Vert D\bar{v}_{\varepsilon _{j}}\Vert _{L^{\infty }(B_{\rho })}\leq
\lim_{j\rightarrow \infty }\,C\,\left\{ \frac{1}{(R-\rho )^{n}}%
\int_{B_{R}}(1+f(Du_{\varepsilon _{j}})\,dx\right\} ^{\theta } \\
& =\,C\,\left\{ \frac{1}{(R-\rho )^{n}}\int_{B_{R}}(1+f(Du)\,dx\right\}
^{\theta }.
\end{split}
\label{eq:lips}
\end{equation}%
Therefore $\bar{v}$ and $u$ are two different minimizers of $F$ in $B_{R}$.
Since $f(\xi )$ is strictly convex for $|\xi |>t_{0}$, by proceeding as in 
\cite{EMM1} it is possible to prove that the set 
\begin{equation*}
E_{0}:=\left\{ x\in B_{R}:\left\vert \frac{Du(x)+D\bar{v}(x)}{2}\right\vert
>t_{0}\right\} .
\end{equation*}%
has zero measure. Therefore 
\begin{equation*}
\Vert Du\Vert _{L^{\infty }(B_{\rho })}\leq \,\Vert Du+D\bar{v}\Vert
_{L^{\infty }(B_{\rho })}+\Vert D\bar{v}\Vert _{L^{\infty }(B_{\rho })}\leq
\,2t_{o}+\Vert D\bar{v}\Vert _{L^{\infty }(B_{\rho })}.
\end{equation*}
\end{proof}



\begin{lemma}
\label{p<f<q} Let $f$ be a positive function in $\mathcal{C}^2(\{\xi\in%
\mathbb{R}^n:\,|\xi|\geq t_0\})$ satisfying ellipticity conditions %
\eqref{ellipticity conditions under p,q}. Then there exists $\bar t \geq t_0$
such that 
\begin{equation}  \label{pfq}
\frac m{2(p-1)}|\xi|^p \leq f(\xi) \leq \frac {2M}{q-1}|\xi|^q, \qquad
|\xi|\geq\bar t
\end{equation}
if $1<p\leq q$, $f(\xi)\geq \frac m2 |\xi|\log |\xi|$ or $f(\xi)\leq 2M
|\xi|\log |\xi|$ if $p=1$ or $q=1$ respectively.
\end{lemma}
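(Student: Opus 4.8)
The plan is to reduce the two-sided bound on $f$ to a one-dimensional computation along radial segments, using the second order Taylor formula with integral remainder. Fix $\xi\in\mathbb{R}^n$ with $|\xi|\geq t_0$, write $t=|\xi|$, $\omega=\xi/|\xi|$, and set $\phi(r)=f(r\omega)$ for $r\geq t_0$. Then $\phi\in\mathcal{C}^2([t_0,+\infty))$ with $\phi'(r)=\nabla f(r\omega)\cdot\omega$ and $\phi''(r)=\sum_{i,j}f_{\xi_i\xi_j}(r\omega)\,\omega_i\omega_j$, so \eqref{ellipticity conditions under p,q} yields
\begin{equation*}
m\,r^{p-2}\leq\phi''(r)\leq M\,(1+r^2)^{\frac{q-2}{2}}\leq M\,r^{q-2}\qquad\text{for }r\geq\max\{t_0,1\},
\end{equation*}
the last inequality because $q\leq 2$ (on the range $r\in[t_0,1]$, relevant only if $t_0<1$, one has $\phi''\leq M$, so the corresponding part of the integral below is $O(t)$ and harmless). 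Taylor's formula at $t_0$ reads $\phi(t)=\phi(t_0)+(t-t_0)\,\phi'(t_0)+\int_{t_0}^{t}(t-r)\,\phi''(r)\,dr$, and the key point is that $\phi(t_0)=f(t_0\omega)\geq 0$ while $|\phi'(t_0)|\leq C_0:=\max_{|\eta|=t_0}|\nabla f(\eta)|<\infty$, \emph{uniformly in} $\omega$, since $f$ and $\nabla f$ are continuous on the compact sphere $\{|\eta|=t_0\}$.

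For the lower bound I would insert $\phi''(r)\geq m\,r^{p-2}$ and compute, for $p>1$,
\begin{equation*}
\int_{t_0}^{t}(t-r)\,m\,r^{p-2}\,dr=\frac{m}{p(p-1)}\,t^{p}-\frac{m\,t_0^{p-1}}{p-1}\,t+\frac{m\,t_0^{p}}{p}
\end{equation*}
(and $m\,t\log(t/t_0)-m(t-t_0)$ when $p=1$). Together with the uniform bounds on $\phi(t_0),\phi'(t_0)$ this gives $f(\xi)\geq\frac{m}{p(p-1)}|\xi|^{p}-C_1|\xi|-C_2$ with $C_1,C_2$ depending only on $m,p,t_0,C_0$. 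Since $1<p\leq 2$ forces $\frac{1}{p(p-1)}\geq\frac{1}{2(p-1)}$, with strict inequality when $p<2$, the leading power eventually dominates the affine remainder and there is $\bar t_1=\bar t_1(m,p,t_0,C_0)$ with $f(\xi)\geq\frac{m}{2(p-1)}|\xi|^{p}$ for $|\xi|\geq\bar t_1$; the case $p=1$ is identical with $\tfrac m2|\xi|\log|\xi|$ replacing $\tfrac{m}{2(p-1)}|\xi|^p$.

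The upper bound is symmetric: inserting $\phi''(r)\leq M\,r^{q-2}$ gives $\int_{\max\{t_0,1\}}^{t}(t-r)\,M\,r^{q-2}\,dr=\frac{M}{q(q-1)}t^{q}+O(t)$ for $q>1$ (resp. $M\,t\log t+O(t)$ for $q=1$), hence $f(\xi)\leq\frac{M}{q(q-1)}|\xi|^{q}+C_3|\xi|+C_4$; and since $q\geq 1$ gives $\frac{1}{q(q-1)}\leq\frac{2}{q-1}$ with a comfortable margin, there is $\bar t_2$ with $f(\xi)\leq\frac{2M}{q-1}|\xi|^{q}$ (resp. $\leq 2M|\xi|\log|\xi|$) for $|\xi|\geq\bar t_2$. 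Choosing $\bar t=\max\{\bar t_1,\bar t_2,t_0,1\}$ concludes.

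The main obstacle is essentially bookkeeping: one must make the threshold $\bar t$ independent of the direction $\omega$ — which is precisely where the hypothesis $f\in\mathcal{C}^2(\{|\xi|\geq t_0\})$ enters, via the boundedness of $f$ and $\nabla f$ on the compact sphere $\{|\xi|=t_0\}$ — and one must check that the leading power dominates the lower order affine (or affine–logarithmic) correction, which is exactly what the restriction $1\leq p\leq q\leq 2$ guarantees at the level of the constants. I do not foresee any genuine difficulty.
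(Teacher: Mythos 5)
Your proof is correct and follows essentially the same route as the paper's: restrict $f$ to the radial ray $r\mapsto f(r\,\xi/|\xi|)$, use the ellipticity condition to sandwich the second derivative, integrate twice (you package this as Taylor's formula with integral remainder, which is the same thing), and control the lower-order terms via continuity of $f$ and $Df$ on the compact sphere $\{|\xi|=t_0\}$. In fact you are slightly more careful than the printed proof: you correctly retain the factor $1/p$ in $\int(t-r)r^{p-2}\,dr=\frac{1}{p(p-1)}t^p+\dots$, which is exactly why the restriction $p\le 2$ (giving $\frac{1}{p(p-1)}\ge\frac{1}{2(p-1)}$) is needed to land on the stated constant $\frac{m}{2(p-1)}$.
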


\begin{proof}
Without loss of generality we can suppose $t_{0}=1$. We will prove the left
hand side in \eqref{pfq}, the right hand side being analogous. We consider
the real function $\varphi :\left[ 1\,,+\infty \right) \rightarrow \mathbb{R}
$ defined by $\varphi \left( t\right) =f\left( t\,\frac{\xi }{|\xi |}\right) 
$. Then $\varphi \in \mathcal{C}^{2}\left[ 1\,,+\infty \right) $ and its
first and second derivatives hold 
\begin{equation*}
\varphi ^{\prime }\left( t\right) =\left( D_{\xi }f\left( t\,\frac{\xi }{%
|\xi |}\right) ,\frac{\xi }{|\xi |}\right) \,;\;\;\;\;\;\;\varphi ^{\prime
\prime }\left( t\right) =\sum_{i,j=1}^{n}f_{\xi _{i}\xi _{j}}\left( t\,\frac{%
\xi }{|\xi |}\right) \frac{\xi _{i}\xi _{j}}{|\xi |^{2}}
\end{equation*}%
and, by \eqref{ellipticity conditions under p,q}, $\varphi ^{\prime \prime
}(t)\geq mt^{p-2}$. Therefore, integrating from $1$ to $t$ we obtain 
\begin{equation*}
\varphi ^{\prime }(t)-\varphi ^{\prime }(1)\geq 
\begin{cases}
\frac{m}{p-1}\left( t^{p-1}-1\right) & \text{ if }p>1 \\ 
m\log t & \text{ if }p=1%
\end{cases}%
\end{equation*}%
and again 
\begin{equation*}
\varphi (t)-\varphi (1)\geq Df\left( \frac{\xi }{|\xi |}\right) (t-1)+%
\begin{cases}
\frac{m}{p-1}\left( t^{p}-t\right) & \text{ if }p>1 \\ 
m(t\log t-t) & \text{ if }p=1.%
\end{cases}%
\end{equation*}%
Therefore, for $t=|\xi |$ we have 
\begin{equation*}
f(\xi )\geq -L(|\xi |-1)+%
\begin{cases}
\frac{m}{p-1}\left( |\xi |^{p}-|\xi |\right) & \text{ if }p>1 \\ 
m(|\xi |\log |\xi |-|\xi |) & \text{ if }p=1%
\end{cases}%
\end{equation*}%
where $L=\max \{|Df(v)|:\,|v|=1\}$.
\end{proof}


\bigskip

\begin{proof}[Proof of Corollary~\protect\ref{theorem on p,q}]
We have to prove that the assumptions of Theorem \ref{superlinear} are
satisfied. First of all, notice that assumptions \eqref{H}$_{1}$ and\eqref{H}%
$_{2}$ hold for $g_{1}(t)=mt^{p-2}$ and $g_{2}(t)=Mt^{q-2}$. Moreover, for
every $t\geq 1$, 
\begin{equation*}
(g_{2}(t))^{\frac{2}{2^{\ast }}}=M^{{2}/{2^{\ast }}}t^{(q-2)\frac{2}{2^{\ast
}}}=mt^{p-2}\frac{M^{{2}/{2^{\ast }}}}{m}t^{(q-2)\frac{2}{2^{\ast }}-(p-2)},
\end{equation*}%
then \eqref{H}$_{3}$ holds for $\beta \geq \bar{\beta}=\frac{n-2}{2n}q-\frac{%
p}{2}+\frac{2}{n}$. By Lemma~\ref{p<f<q}, for $\alpha \geq \bar{\alpha}=q/p$
and $|\xi |$ large enough, 
\begin{equation*}
g_{2}(|\xi |)|\xi |^{2}=M|\xi |^{q}\leq M\left( \frac{2p-2}{m}\right)
^{\alpha }\left( \frac{m}{2p-2}|\xi |^{p}\right) ^{\alpha }\leq M\left( 
\frac{4}{m}\right) ^{\alpha }[1+f(\xi )]^{\alpha }
\end{equation*}%
and \eqref{H}$_{4}$ holds. Moreover, by Lemma~\ref{p<f<q} also \eqref{H}$%
_{5} $ holds. Therefore, all the assumptions of Theorem \ref{superlinear}
are satisfied if \eqref{ab} is satisfied, that is 
\begin{equation*}
\alpha <\frac{q}{n\beta +q-2}\quad \iff \quad \frac{q}{p}<\frac{q}{\frac{n}{2%
}q-\frac{n}{2}p}\quad \iff \quad \frac{q}{p}<\frac{n+2}{n}
\end{equation*}%
since $\mu =2-q$. In order to obtain the correct exponent in 
\eqref{bound for
p,q<=2}, notice that in this case by \eqref{ultima} we can conclude 
\begin{equation*}
\Vert Du\Vert _{L^{\infty }\left( B_{\rho }\right) }^{q}\leq \,\left[ \frac{%
c_{7}}{(R-\rho )^{n}}\int_{B_{R_{0}}}(1+f(Du))\,dx\right] ^{\frac{q\alpha }{%
q-\alpha (n\beta +q-2)}}.
\end{equation*}%
Since $\frac{q\alpha }{q-\alpha (n\beta +q-2)}=\frac{2}{(n+2)p-nq}$, we get %
\eqref{bound for p,q<=2}.
\end{proof}


\end{document}